\documentclass[letterpaper]{amsart} 

\usepackage{amsmath}
\usepackage[all]{xy}

\usepackage{psfrag}
\usepackage{epsfig}
 
\numberwithin{equation}{section}

\newtheorem{theorem}{Theorem}[section] 
\newtheorem{proposition}[theorem]{Proposition} 
 
\newtheorem{corollary}[theorem]{Corollary} 
\newtheorem{lemma}[theorem]{Lemma} 
 
\theoremstyle{definition}

\newtheorem{definition}[theorem]{Definition}

\newcommand{\credit}[1]{\smallskip\noindent {\textbf{#1.\ }}}

\def\cc{\mathbf{c}}

\def\yy{\mathbf{y}}
\def\TT{\mathbb{T}}

\def\ZZ{\mathbb{Z}}

\newcommand{\overunder}[2]{
\!\begin{array}{c}
\scriptstyle{#1}\\[-.1in]
-\!\!\!-\!\!\!-\\[-.1in]
\scriptstyle{#2}
\end{array}
\!
}
\begin{document}

%\title[Mutation Classes Associated With Generalized Cartan Matrices]{Mutation Classes Associated With The Generalized Cartan Matrices of Size $3$}
\title{Cluster algebras and symmetric matrices}

\author{Ahmet I. Seven}

\address{Middle East Technical University, Department of Mathematics, 06800, Ankara, Turkey}
\email{aseven@metu.edu.tr}

\thanks{The author's research was supported in part by the Turkish Research Council (TUBITAK)}

%\thanks{The author's research was supported in part by the Scientific and Technological Research Council of Turkey (TUBITAK) grant \# 110T207}

\date{April 10, 2013}
%\date{February 01, 2013}
%\date{\today}

%\begin{abstract}
%Mutation of skew-symmetrizable matrices is a fundamental operation that first arised in Fomin-Zelevinsky's theory of cluster algebras; it also appears naturally in many different areas of mathematics. %???. Mutation can also be naturally viewed as an operation on certain graphs, called diagrams. 
%%Mutation operation defines an equivalence relation on skew-symmetrizable matrices and their graphs, which is called mutation-equivalence. 
%In this paper, we study mutation classes of skew-symmetrizable $3\times 3$ matrices and associated graphs. We determine representatives for these classes using a natural minimality condition, generalizing and strengthening results of Beineke-Brustle-Hille and Felikson-Shapiro-Tumarkin. Furthermore, we obtain a new numerical invariant for the mutation operation on skew-symmetrizable matrices of arbitrary size. 

%In particular, we review the correspondence between mutation classes of skew-symmetrizable matrices and generalized Cartan matrices in the finite and affine type. 
%Furthermore, we give a description of the mutation classes associated with the generalized Cartan matrices of size $3$, generalizing results of Beineke-Bruestle-Hille. %In particular, we give an interpretation of the Markov constant in terms of generalized Cartan matrices.  
%\end{abstract}

\begin{abstract}
In the structural theory of cluster algebras, a crucial role is played by a family of integer vectors, called {$\cc$-vectors}, which parametrize the coefficients. It has recently been shown that each $\cc$-vector with respect to an acyclic initial seed is a real root of the corresponding root system. In this paper, we obtain an interpretation of this result in terms of symmetric matrices. We show that, for skew-symmetric cluster algebras, the $\cc$-vectors associated with any seed defines a quasi-cartan companion for the corresponding exchange matrix, i.e. they form a companion basis, and we establish some basic combinatorial properties. In particular, we show that these vectors define an admissible cut of edges in the associated quivers.
%, which is a notion used to in the study of Jacobian algebras.

%Mutation of skew-symmetrizable matrices is a fundamental operation that first arised in Fomin-Zelevinsky's theory of cluster algebras; it also appears naturally in many different areas of mathematics. %???. Mutation can also be naturally viewed as an operation on certain graphs, called diagrams. 
%Mutation operation defines an equivalence relation on skew-symmetrizable matrices and their graphs, which is called mutation-equivalence. 
%In this paper, we study mutation classes of skew-symmetrizable $3\times 3$ matrices and associated graphs. We determine representatives for these classes using a natural minimality condition, generalizing and strengthening results of Beineke-Brustle-Hille and Felikson-Shapiro-Tumarkin. Furthermore, we obtain a new numerical invariant for the mutation operation on skew-symmetrizable matrices of arbitrary size. 

%In particular, we review the correspondence between mutation classes of skew-symmetrizable matrices and generalized Cartan matrices in the finite and affine type. 
%Furthermore, we give a description of the mutation classes associated with the generalized Cartan matrices of size $3$, generalizing results of Beineke-Bruestle-Hille. %In particular, we give an interpretation of the Markov constant in terms of generalized Cartan matrices.  
\end{abstract}
%\subjclass[2000]{Primary:
%05E15,  % Combinatorial problems concerning the classical groups
%Secondary:
%05C50, % Graphs and matrices
%15A36, % Matrices of integers,
%17B67. % Kac-Moody (super)algebras (structure and representation theory)
%}
\maketitle
%\tableofcontents

\section{Introduction}
\label{sec:intro}

In the theory of cluster algebras, a central role is played by a family of elements called 
\emph{coefficients}. These elements are parametrized by a family of integer vectors called \emph{$\cc$-vectors} \cite{CAIV,NZ}.  
It has recently been shown in \cite{ST} that each $\cc$-vector with respect to an initial acyclic seed is a real root of the corresponding root system. In this paper, we obtain an interpretation of this result in terms of symmetric matrices.

To be more specific, let us recall some terminology. In the theory of cluster algebras, there is a notion of a $Y$-seed, which is defined as a tuple $(\yy,B)$ where $\yy$ is a coefficient tuple and $B$ is a skew-symmetrizable matrix (i.e. an $n\times n$ inetger matrix such that $DB$ is skew-symmetric for some diagonal matrix $D$ with positive diagonal entries). On the other hand, the coefficient tuple $\yy$ is determined by the corresponding $\cc$-vectors, which are conjectured to have a sign coherence property \cite{CAIV}. Since, in this paper, we deal with the combinatorial properties of $\cc$-vectors and we do not need algebraic properties of cluster algebras, it is convenient for us to slightly abuse the terminology and call a \emph{$Y$-seed}
a tuple $(\cc, B)$, where $B$ is a skew-symmetrizable integer matrix and $\cc=(\cc_1,...,\cc_n)$, where each $\cc_i=(c_1,...,c_n) \in \ZZ^n$ is non-zero and has the following sign coherence property: the vector $\cc_{i}$ has either all entries nonnegative or all entries nonpositive; we write $sgn(\cc_i)=+1$ or $sgn(\cc_i)=-1$ respectively. We refer to $B$ as the \emph{exchange matrix} of a $Y$-seed and $\cc$ as the \emph{$\cc$-vector} tuple. 
%We will assign a $Y$-seed to each vertex of an $n$-regular tree. To be more precise, let $\TT_n$ an \emph{$n$-regular tree} whose edges are labeled by the numbers $1, \dots, n$, so that the $n$ edges emanating from each vertex receive different labels. We write $t \overunder{k}{} t'$ to indicate that vertices $t,t'\in\TT_n$ are joined by an edge labeled by~$k$.
We also use the notation $[b]_+ = \max(b,0)$. For $k = 1, \dots, n$, the \emph{$Y$-seed mutation} $\mu_k$ transforms
$(\cc, B)$ into the tuple $\mu_k(\cc, B)=(\cc', B')$ defined as follows \cite[Equation~(5.9)]{CAIV}:
\begin{itemize}
\item
The entries of the exchange matrix $B'=(B'_{ij})$ are given by
\begin{equation}
\label{eq:matrix-mutation}
B'_{ij} =
\begin{cases}
-B_{ij} & \text{if $i=k$ or $j=k$;} \\[.05in]
B_{ij} + [B_{ik}]_+ [B_{kj}]_+ - [-B_{ik}]_+ [-B_{kj}]_+
 & \text{otherwise.}
\end{cases}
\end{equation}
\item
The tuple $\cc'=(\cc_1',\dots,\cc_n')$ is given by
\begin{equation}
\label{eq:y-mutation}
\cc'_i =
\begin{cases}
-\cc_{i} & \text{if $i = k$};\\[.05in]
\cc_i+[sgn(\cc_k)B_{k,i}]_+\cc_k
 & \text{if $i \neq k$}.
\end{cases}
\end{equation}
\end{itemize}

It is easy to see that $B'$ is skew-symmetrizable (with the same choice of $D$). It is not clear, however, that $\cc'=(\cc_1',\dots,\cc_n')$ has the sign coherence property, so $\mu_k(\cc, B)$ may not be a $Y$-seed. However, this will not be a problem for us because, in this paper, we will study $Y$-seeds $(\cc, B)$ for which $\mu_k(\cc, B)$ is a $Y$-seed; 
note then that $\mu_k$ is involutive (that is, it transforms $(\cc', B')$ into the original $Y$-seed $(\cc, B)$). We shall also use the notation $B' = \mu_k(B)$ (resp.~$(\cc', B') = \mu_k(\cc, B)$) and call the transformation $B \mapsto B'$ the \emph{matrix mutation}. 
This operation is involutive, so it defines a \emph{mutation-equivalence} relation on skew-symmetrizable matrices.

We use $Y$-seeds in an association with vertices of an $n$-regular tree. To be more precise, let $\TT_n$ be an \emph{$n$-regular tree} whose edges are labeled by the numbers $1, \dots, n$, so that the $n$ edges emanating from each vertex receive different labels. We write $t \overunder{k}{} t'$ to indicate that vertices $t,t'\in\TT_n$ are joined by an edge labeled by~$k$.
A \emph{$Y$-seed pattern} is an assignment
of a seed $(\cc_t, B_t)$
to every vertex $t \in \TT_n$, such that the seeds assigned to the
endpoints of any edge $t \overunder{k}{} t'$ are obtained from each
other by the seed mutation~$\mu_k$.
Following \cite{NZ}, we write:
\begin{equation}
\label{eq:seed-labeling}
\cc_t = (\cc_{1;t}\,,\dots,\cc_{n;t})\,,\quad
B_t = (B_{ij;t})\,.
\end{equation}
%We also refer to a family $(\yy_t, B_t)|_{t \in \TT_n}$ as a \emph{$Y$-seed pattern}, and a family $(B_t)|_{t \in \TT_n}$ as an \emph{exchange matrix pattern}.

Let us note that a seed pattern, if exists, is uniquely determined by a fixed initial seed at a vertex $t_0$ in $\TT^n$. However, the existence of a $Y$-seed pattern is far from being trivial. It is conjectured that the particular choice of the initial $Y$-seed $(\cc_0,B_0)$, where $\cc_0$ is the tuple of standard basis vectors determines a $Y$-seed pattern \cite{CAIV}. 
This conjecture has been proved for skew-symmetric $B_0$ \cite{DWZ2}. This will be sufficient for us in this paper as we will not consider skew-symmetrizable matrices; we will be working only with skew-symmetric matrices and the associated $Y$-seed patterns. In an important special case  the $\cc$-vectors have a particular property \cite{RS}, which we will recall after a bit more preparation. Let us recall that
%Let us note that a $Y$-seed pattern, once exists, is uniquely determined by a $Y$-seed at an initial vertex (root) $t_0$. However, the existence of a $Y$-seed pattern is by far non-trivial. It is conjectured that the particular choice of the initial $Y$-seed $(\cc_0,B_0)$, where $\cc_0$ is the tuple of standard basis vectors, determines a $Y$-seed pattern \cite{CAIV}. 
%To describe the one we will study in this paper, we need one more piece of terminology.Suppose that $B$ is a skew-symmetrizable matrix of size $n$. 
the \emph{diagram} of a skew-symmetrizable $n\times n$ matrix ${B}$ is the directed graph $\Gamma ({B})$ defined as follows: the vertices of $\Gamma ({B})$ are the indices $1,2,...,n$ such that there is a directed edge from $i$ to $j$ if and only if ${B}_{j,i} > 0$, and this edge is assigned the weight $|B_{ij}B_{ji}|\,$. By a {subdiagram} of $\Gamma(B)$, we always mean a diagram obtained from $\Gamma(B)$ by taking an induced (full) directed subgraph on a subset of vertices and keeping all its edge weights the same as in $\Gamma(B)$. By a cycle in $\Gamma(B)$ we mean a subdiagram whose vertices can be labeled by elements of $\ZZ/m\ZZ$ so that the edges betweeen them are precisely $\{i,i+1\}$ for $i \in  \ZZ/m\ZZ$. Let us also note that if $B$ is skew-symmetric then it is also represented, alternatively, by a quiver whose vertices are the indices $1,2,...,n$ and there are $B_{j,i}>0$ many arrows from $i$ to $j$. This quiver uniquely determines the corresponding skew-symmetric matrix, so mutation of skew-symmetric matrices can be viewed as a "quiver mutation".

We call a $Y$-seed $(\cc,B)$ \emph{acyclic} if $\Gamma(B)$ is acyclic, i.e. has no oriented cycles. Let us also recall that, for a skew-symmetrizable $B$ with an acylic diagram $\Gamma(B)$, there is a corresponding generalized Cartan matrix $A$ such that $A_{i,i}=2$ and $A_{i,j}=-|B_{i,j}|$ for $i\ne j$. Then there is an associated root system in the root lattice spanned by the simple roots $\alpha_i$ \cite{K}. For each simple root $\alpha_i$, the corresponding reflection $s_{\alpha_i}=s_i$ is the linear isomorphism defined on the basis of simple roots as $s_i(\alpha_j)=\alpha_j-A_{i,j}\alpha_i$. Then the real roots are defined as the vectors obtained from the simple roots by a sequence of reflections. It is well known that the coordinates of a real root with respect to the basis of simple roots are either all nonnegative or all nonpositive, see \cite{K} for details. We can now recall the main statements that we use in this paper. 

\begin{theorem}\label{th:DWZ2-RS} \cite[Theorem~1.4]{ST}, \cite[Theorem~1.7]{DWZ2} Suppose that the initial seed $\Sigma_0=(\cc_0,B_0)$ is acyclic with $\cc_{0}$ being the standard basis. Then $\Sigma_0$ uniquely determines a $Y$-seed pattern $\Sigma_t$, $t \in \TT_n$; furthermore each $\cc$-vector $\cc_{i;t}$ is the coordinate vector of a real root with respect to the basis of simple roots in the corresponding root system.
\end{theorem}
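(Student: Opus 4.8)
The plan is to take the existence and uniqueness of the $Y$-seed pattern, together with the sign coherence of its $\cc$-vectors, as given input: since $B_0$ is skew-symmetric this is precisely the quoted result of \cite{DWZ2}, so I may assume that every $\mu_k$ again produces a $Y$-seed and that $t\mapsto\Sigma_t$ is well defined and uniquely determined by $\Sigma_0$. What then remains is the root-theoretic assertion, and for this I would fix, once and for all, the data attached to the \emph{initial} acyclic matrix $B_0$: the root lattice $\bigoplus_i\ZZ\alpha_i$, the simple reflections $s_i$, and the symmetric bilinear form $(\cdot,\cdot)$ determined by $(\alpha_i,\alpha_i)=2$ and $(\alpha_i,\alpha_j)=-|(B_0)_{ij}|$ for $i\neq j$. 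Because $B_0$ is skew-symmetric, every real root $\beta$ satisfies $(\beta,\beta)=2$, so the reflection in $\beta$ is the uniform map $\gamma\mapsto\gamma-(\gamma,\beta)\beta$, and it permutes the set of real roots. The goal is to show that each $\cc_{i;t}$, read in the basis $\{\alpha_i\}$, is a real root (the sign coherence being then a formal consequence, since a real root has all coordinates of one sign).

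I would argue by induction on the distance from $t_0$ in $\TT_n$. At $t_0$ the $\cc$-vectors are the standard basis, i.e.\ the simple roots $\alpha_i$, which are real roots. For the inductive step, assume the claim at $t$ and let $t\overunder{k}{}t'$. By \eqref{eq:y-mutation} the vector $\cc_{k;t'}=-\cc_{k;t}=s_{\cc_{k;t}}(\cc_{k;t})$ is again a real root; while for $i\neq k$ one has $\cc_{i;t'}=\cc_{i;t}$ when $sgn(\cc_{k;t})B_{ki;t}\le 0$ and $\cc_{i;t'}=\cc_{i;t}+sgn(\cc_{k;t})B_{ki;t}\,\cc_{k;t}$ otherwise. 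The first alternative is a real root by hypothesis, so the whole step is a \emph{partial reflection} in the real root $\cc_{k;t}$, provided I can identify the nonzero coefficient with the one coming from the reflection, namely provided
\[
sgn(\cc_{k;t})B_{ki;t}=-(\cc_{i;t},\cc_{k;t})\qquad\text{whenever } sgn(\cc_{k;t})B_{ki;t}>0 .
\]
Granting this, $\cc_{i;t'}=\cc_{i;t}-(\cc_{i;t},\cc_{k;t})\cc_{k;t}=s_{\cc_{k;t}}(\cc_{i;t})$ is a real root, which closes the induction.

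To secure the displayed identity I would strengthen the induction hypothesis to track the entire Gram matrix: let $A_t$ be the symmetric matrix with $(A_t)_{ij}=(\cc_{i;t},\cc_{j;t})$, and carry along the assertion that $A_t$ is a quasi-Cartan companion of $B_t$, i.e.\ $(A_t)_{ii}=2$ and $|(A_t)_{ij}|=|B_{ij;t}|$ for $i\neq j$, \emph{together with} a sign rule recording the sign of each $(A_t)_{ij}$ in terms of $sgn(\cc_{i;t})$, $sgn(\cc_{j;t})$ and the orientation of the edge $\{i,j\}$ in $\Gamma(B_t)$. This is exactly the \emph{companion basis} statement that is the theme of the present paper. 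Its inductive verification reduces to a comparison, for the step $t\to t'$, between the action of $s_{\cc_{k;t}}$ on the reflected vectors and the matrix mutation \eqref{eq:matrix-mutation}: since $s_{\cc_{k;t}}$ preserves $(\cdot,\cdot)$, the entries between two indices that are both unchanged and between two indices that are both reflected are preserved, matching the fact that \eqref{eq:matrix-mutation} produces no correction term for such pairs $i,j\neq k$; the entries involving $k$ are governed by the sign-change rule of \eqref{eq:matrix-mutation}; and only the mixed pairs $i,j\neq k$ (one reflected, one fixed) carry a genuine correction term.

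The main obstacle is precisely this sign bookkeeping on the mixed pairs. For such a pair the reflection contributes the cross term $-(\cc_{i;t},\cc_{k;t})(\cc_{k;t},\cc_{j;t})$, which must reproduce the term $[B_{ik;t}]_+[B_{kj;t}]_+-[-B_{ik;t}]_+[-B_{kj;t}]_+$ of \eqref{eq:matrix-mutation}; matching these forces the value of $(\cc_{k;t},\cc_{j;t})$ even when $j$ is a \emph{fixed} index, so the full sign rule — not merely the instance appearing in the displayed identity — is genuinely needed. Controlling these signs is equivalent to showing that the sign assignment on the off-diagonal entries stays consistent around every chordless cycle of $\Gamma(B_t)$, which is where the combinatorics of admissible cuts enters; I would record the sign rule explicitly, check that the partial reflection at $k$ respects it edge by edge, and treat the cycles through $k$ as the one delicate case, verifying there that the two monomials $[B_{ik;t}]_+[B_{kj;t}]_+$ and $[-B_{ik;t}]_+[-B_{kj;t}]_+$ exactly account for the cross term produced by $s_{\cc_{k;t}}$.
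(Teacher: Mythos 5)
You should first note where this statement sits in the paper: it is a \emph{quoted} result --- the paper cites \cite[Theorem~1.4]{ST} and \cite[Theorem~1.7]{DWZ2} and offers no proof of it; everything in Sections~\ref{sec:pre}--\ref{sec:proof} is deduced \emph{from} it. Your plan is therefore an attempt to reprove the Speyer--Thomas statement, and your strategy (induction along $\TT_n$, carrying the Gram matrix $A_t=(\cc_{i;t}^TA_0\cc_{j;t})$ and a sign rule as a strengthened hypothesis) is in spirit the route of \cite{ST}. But be aware that it runs in the \emph{opposite} logical direction from this paper, so you cannot borrow its lemmas: the proof of Proposition~\ref{prop:prop1} --- which is exactly your displayed identity $sgn(\cc_{k;t})B_{ki;t}=-(\cc_{i;t},\cc_{k;t})$ --- uses that the mutated vector $\cc'_i$ is \emph{again} a real root, i.e.\ it invokes the conclusion of the theorem at the adjacent vertex $t'$, which is precisely what your induction has not yet established there.

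The genuine gap is that your induction hypothesis appeals to ``a sign rule'' that is never written down, and the inductive step --- invariance of that rule under mutation --- is where the entire mathematical content lies, yet it is only gestured at. Concretely: for your mixed pairs you need $|A_{ij}-A_{ik}A_{kj}|=|B'_{ij}|$ with the correct sign, and by Proposition~\ref{prop:adm-triangle} this holds exactly when every triangle through $k$ satisfies the oriented/non-oriented parity condition, i.e.\ when $A_t$ is \emph{admissible}; establishing admissibility at every seed is the substance of Lemmas~\ref{lem:lem-cycle0} and~\ref{lem:lem-cycle}, a delicate case analysis over cycles through $k$ that your ``check edge by edge, treat cycles as the delicate case'' does not perform. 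A further unaddressed point: the sign rule at $t'$ refers to $sgn(\cc'_{i})$ for reflected indices, and sign coherence (granted from \cite{DWZ2}) tells you only that $\cc'_i$ \emph{has} a uniform sign, not which one --- so the sign of $(\cc'_k,\cc'_j)$ needed for the next step is not determined by local data at $t$ and must itself be controlled inductively. Until the sign rule is stated explicitly and its preservation proved (as in \cite{ST}, or by reworking this paper's cycle lemmas so that they do not presuppose the theorem), what you have is a correct plan with its central difficulty deferred, not a proof.
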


In this paper, we obtain an interpretation of this result in terms of symmetric matrices, using the notion of a quasi-Cartan companion of a skew-symmetrizable matrix \cite{BGZ}. 
Let us recall that an $n\times n$  matrix $A$ is called symmetrizable if there exists a symmetrizing diagonal matrix $D$ with positive diagonal entries such that $DA$ is symmetric. A crucial property of $A$ is sign symmetry: $sgn(A_{i,j})=sgn(A_{j,i})$. We say that $A$ is a \emph{quasi-Cartan matrix} if it is symmetrizable and all of its diagonal entries are equal to $2$. 
A \emph{quasi-Cartan companion} (or "companion" for short) of a skew-symmetrizable matrix $B$ is a quasi-Cartan matrix $A$ with $|A_{i,j}|= |B_{i,j}|$ for all $i \ne j$. The basic example of a quasi-Cartan companion of $B$ is the associated generalized Cartan matrix $A$, which is defined as $A_{i,j}= -|B_{i,j}|$, for all $i\ne j$. In this paper, we use a variation of this construction by choosing the signs of the entries in relation with the structure of the associated diagram. More precisely, we call a quasi-Cartan companion $A$ of a skew-symmetrizable matrix $B$ \emph{admissible} if, for any oriented (resp. non-oriented) cycle $Z$ in $\Gamma(B)$, there is exactly an odd (resp. even) number of edges $\{i,j\}$ such that $A_{i,j}>0$. An arbitrary skew-symmetrizable matrix need not have an admissible quasi-Cartan companion; however, if $\Gamma(B)$ is acyclic, then it has an admissible quasi-Cartan companion, the associated  generalized Cartan matrix. Our main result is a generalization of this fundamental property: %that any skew-symmetric matrix which is mutation-equivalent to $B$ has also an admissible quasi-Cartan companion:
%The main examples of  companions are the generalized Cartan matrices: if $\Gamma(B)$ is acyclic, i.e. has no oriented cycles at all, then the quasi-Cartan companion $A$ with $A_{i,j}= -|B_{i,j}|$, for all $i\ne j$, is admissible
%Our first result is that $\cc$-vectors also define a quasi-Cartan companion as follows:

\begin{theorem}\label{th:admissible}
Suppose that $B$ is a skew-symmetric matrix which is mutation-equivalent to $B_0$ such that $\Gamma(B_0)$ is acyclic. Then $B$ has an admissible quasi-Cartan companion.
\end{theorem}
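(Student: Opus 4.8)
The plan is to realize the companion explicitly from the $\cc$-vectors, using Theorem~\ref{th:DWZ2-RS}. Since $B$ is skew-symmetric and mutation-equivalent to the acyclic $B_0$, the generalized Cartan matrix $A_0$ associated with $B_0$ is symmetric, so the root lattice of $B_0$ carries a symmetric bilinear form $(\cdot\,,\cdot)$ with $(\alpha_i,\alpha_i)=2$ and $(\alpha_i,\alpha_j)=A_{0;i,j}$ on the simple roots. I would fix a vertex $t\in\TT_n$ with $B_t=B$ in the $Y$-seed pattern determined by $\Sigma_0=(\cc_0,B_0)$, where $\cc_0$ is the tuple of simple roots. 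By Theorem~\ref{th:DWZ2-RS} each $\cc_{i;t}$ is a real root, so $(\cc_{i;t},\cc_{i;t})=2$, and I would define $A=(A_{ij})$ by $A_{ij}=(\cc_{i;t},\cc_{j;t})$. This matrix is symmetric with all diagonal entries equal to $2$, hence a quasi-Cartan matrix; the whole task is to show that it is an admissible companion of $B$.

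The remaining two properties, the companion condition $|A_{ij}|=|B_{ij}|$ and admissibility, I would establish together by induction on the distance from $t_0$ in $\TT_n$, since the sign information packaged in admissibility is exactly what controls the magnitudes after a mutation. For the base case $t=t_0$ one has $\cc_{i;t_0}=\alpha_i$, so $A_{ij}=A_{0;i,j}=-|B_{0;ij}|$; this is the associated generalized Cartan companion, it is a companion, and it is admissible because $\Gamma(B_0)$ is acyclic (there are no oriented cycles, and every non-oriented cycle has zero positive edges, which is even). For the inductive step, applying $\mu_k$ and using \eqref{eq:y-mutation} together with $(\cc_k,\cc_k)=2$, I would compute, for $i,j\neq k$,
\begin{equation*}
A'_{ij}=A_{ij}+[\varepsilon B_{kj}]_+A_{ik}+[\varepsilon B_{ki}]_+A_{kj}+2\,[\varepsilon B_{ki}]_+[\varepsilon B_{kj}]_+,\qquad \varepsilon=sgn(\cc_k),
\end{equation*}
together with the analogous formulas in the $k$-th row and column, and compare these with the matrix-mutation rule \eqref{eq:matrix-mutation} for $B'_{ij}$. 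The inductive hypothesis supplies $|A_{ik}|=|B_{ik}|$, $|A_{kj}|=|B_{kj}|$ and the relative signs forced by admissibility, which I would substitute into this identity to recover $|A'_{ij}|=|B'_{ij}|$.

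The genuinely hard part will be carrying the admissibility condition through a mutation, because the cycle structure of $\Gamma(B)$ is not preserved: the edges incident to $k$ reverse, and edges can be created or destroyed by the quadratic term in \eqref{eq:matrix-mutation}. To handle this I would first record a reformulation of admissibility as a condition on the product of the signs $sgn(A_{i_l,i_{l+1}})$ around each cycle $(i_1,\dots,i_m)$ of $\Gamma(B)$ (cycles being induced, by the paper's definition), and then show that this sign-product invariant transforms compatibly with \eqref{eq:matrix-mutation}: a cycle of $\Gamma(B')$ is traced back to a cycle, or to shorter cycles and paths of $\Gamma(B)$ passing through $k$, and the local formula for $A'$ above converts the parity condition for the latter into the parity condition for the former. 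The main obstacle, and the step requiring the most care, is precisely this matching of cycles across a mutation, reconciled with the sign contribution of the $2\,[\varepsilon B_{ki}]_+[\varepsilon B_{kj}]_+$ term; once it is in place, both the companion condition and admissibility propagate along $\TT_n$, and evaluating at the vertex $t$ with $B_t=B$ finishes the proof.
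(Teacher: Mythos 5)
Your construction coincides with the paper's: the companion $A=(\cc_i^TA_0\cc_j)$ built from the $\cc$-vectors at $t$, the base case at $t_0$, and an induction along $\TT_n$; your Gram-expansion formula for $A'_{ij}$ is also correct. The gap is in the inductive step, where you assert that the inductive hypothesis supplies ``the relative signs forced by admissibility'' and that these suffice both to recover $|A'_{ij}|=|B'_{ij}|$ and to push admissibility through the mutation. They do not: admissibility is only a parity condition on cycles and says nothing about how the signs of the entries $A_{ki}$ on edges at $k$ relate to $\varepsilon=sgn(\cc_k)$. What your displayed formula actually needs is the identity $[\varepsilon B_{ki}]_+=-A_{ki}$ whenever the bracket is nonzero, i.e.\ $sgn(B_{ki})=sgn(\cc_k)\Rightarrow A_{ki}=-|B_{ki}|$, which is Proposition~\ref{prop:prop1} and is proved in the paper by an argument your sketch never performs: since $\cc'_i=\cc_i+|B_{ki}|\cc_k$ is again (the coordinate vector of) a real root, expanding $2={\cc'_i}^TA_0\cc'_i$ pins down $A_{ki}$. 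With that identity your expansion becomes exactly the companion mutation $\mu_k^{\epsilon}$ of Definition~\ref{def:comp-mut} (Lemma~\ref{lem:lem-A'c'}), and the companion condition for $B'$ --- including $A'_{ij}=0$ on non-edges, where the quadratic terms must cancel and Proposition~\ref{prop:prop1} is silent because $B_{ij}$ may vanish --- is then delivered by the triangle criterion of Proposition~\ref{prop:adm-triangle}, not by a direct magnitude count.

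The cycle-matching step you rightly flag as the hard part is the paper's Lemma~\ref{lem:lem-cycle}, and its proof consumes strictly more than your inductive invariant provides. It uses Lemma~\ref{lem:lem-cycle0} --- every directed path carries at most one positive edge and every oriented cycle exactly one, which is stronger than odd parity --- and it repeatedly invokes the local dichotomy from Proposition~\ref{prop:prop1}: at a vertex that is neither source nor sink, at least one incident entry of $A$ is negative, so all edges at $k$ with positive companion entry are oriented the same way at $k$ (this is what drives Case~2, Subcase~1 there). These are properties of the specific $\cc$-vector companion extracted from the real-root structure, not consequences of admissibility. Indeed, if bare admissibility propagated under $\mu_k^{\epsilon}$ by local parity bookkeeping, Theorem~\ref{th:admissible} would follow in two lines by induction from the Cartan companion of $B_0$, with no $\cc$-vectors at all; the paper obtains that propagation (Corollary~\ref{cor:admissible2}) only a posteriori, from Theorem~\ref{th:companion2} together with the uniqueness of admissible companions up to simultaneous sign changes cited from \cite{S3}. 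So your outline locates the crux correctly but defers it, and the invariant you propose to induct on --- companion plus parity --- is too weak for the deferred step to go through as described; it must be strengthened by the sign data of Proposition~\ref{prop:prop1} and Lemma~\ref{lem:lem-cycle0} before the case analysis can close.
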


We obtain this statement by establishing an admissible quasi-Cartan companion using $\cc$-vectors as follows:

%\begin{theorem}\label{th:minimum}
%Let $\Sigma_t=(B,\cc(t))$ be a $Y$-seed with the family of $\cc$-vectors $\cc_t=(\cc_1,...,\cc_n)$ with respect to an acyclic initial seed $\Sigma_0$. Suppose that $\Sigma_t=\mu_r...\mu_1(\Sigma_0)$. Then there is a sequence of signs $\epsilon_i \in \{+,-\}$ such that $\cc_t=\mu^{\epsilon_r}_r...\mu^{\epsilon_1}_1(\cc_0)$ .

%\begin{theorem}\label{th:companion}
%Let $\Sigma_t=(\cc,B)$ be a $Y$-seed with respect to an acyclic initial seed $\Sigma_0=(\cc_0,B_0)$. Suppose that $\Sigma_t=\mu_r...\mu_1(\Sigma_0)$. Then there is a sequence of signs $\epsilon_i \in \{+,-\}$ such that $\cc_t=\mu^{\epsilon_r}_r...\mu^{\epsilon_1}_1(\cc_0)$.

%Furthermore, the quasi-Cartan matrix $A=\mu^{\epsilon_r}_r...\mu^{\epsilon_1}_1(A_0)$ has the following property:
%$A_{k,i}=-sgn(B_{k,i})B_{k,i}$ for all $i$ such that $sgn(B_{k,i})=-sgn(\cc_k)$. The quasi-Cartan matrix $A$ is also a companion.

%For any seed $\Sigma_t$, its family of $\cc$-vectors $\cc_t=(\cc_1,...,\cc_n)$ (with respect to an acyclic initial seed) can be obtained by a sequence of $\cc$-compatible mutations. 
%\end{theorem}

\begin{theorem}\label{th:companion0}
Let $\Sigma_t=(\cc,B)$ be a $Y$-seed with respect to an acyclic initial seed $\Sigma_0=(\cc_0,B_0)$ such that $B_0$ is skew-symmetric (and $\cc_{0}$ is the standard basis). Let $A_0$ be the  (symmetric) generalized Cartan matrix associated to $B_0$. %(i.e. $A_{i,i}=2$ and $A_{i,j}=-|B_{i,j}|$ for $i\ne j$). 
Then $A_t=A=(\cc_i^TA_0\cc_j)$ is a quasi-Cartan companion of $B$ \footnote{In the terminology of \cite{P,BM}, the family $\cc_t$ gives rise to a ''companion basis'' associated to $B_t$}. Furthermore, the matrix $A$ has the following properties:
\begin{itemize}
\item
If $sgn(B_{j,i})=sgn(\cc_j)$, then $\cc_j^TA_0\cc_i=-sgn(\cc_j)B_{j,i}=-|B_{j,i}| $.
%If $sgn(B_{j,i})=sgn(\cc_j)$, then $A_{j,i}=-sgn(c_j)B_{j,i} $ %(this is equal to $\cc_j^TS\cc_i$)
\item
If $sgn(B_{j,i})=-sgn(\cc_j)$, then $\cc_j^TA_0\cc_i=sgn(\cc_i)B_{j,i}=-sgn(\cc_i)sgn(\cc_j)|B_{j,i}|$.
%If $sgn(B_{j,i})=-sgn(\cc_j)$, then $A_{j,i}=-sgn(c_i)sgn(c_j)B_{j,i} $ %(this is equal to $\cc_j^TS\cc_i$)
\end{itemize}
(here $\cc_i^T$ denotes the transpose of $\cc_i$ viewed as a column vector.)

\smallskip
In particular; if $sgn(\cc_j)=-sgn(\cc_i)$, then $B_{j,i}=sgn(\cc_i)\cc_j^TA_0\cc_i$.

%If $sgn(B_{j,i})=sgn(\cc_j)$, then $\cc_j^TS\cc_i=-sgn(c_j)B_{j,i}=-|B_{j,i}| $.
%\item
%If $sgn(B_{j,i})=-sgn(\cc_j)$, then $\cc_j^TS\cc_i=-sgn(c_i)sgn(c_j)B_{j,i}=sgn(\cc_i)|B_{j,i}|$.???
%\item
%If $sgn(B_{j,i})=-sgn(\cc_j)$, then $\cc_j^TS\cc_i=sgn(\cc_i)B_{j,i}=-sgn(\cc_i)sgn(\cc_j)|B_{j,i}|$.

%Let $\epsilon=-sgn(\cc_k)$. Then for $t \overunder{k}{} t'$ $\cc'=\mu^{\epsilon}(\cc)$.

%Suppose that $\Sigma_t=\mu_r...\mu_1(\Sigma_0)$. Then there is a sequence of signs $\epsilon_i \in \{+,-\}$ such that 

%Suppose that $\cc_t=\mu^{\epsilon_r}_r...\mu^{\epsilon_1}_1(\cc_0)$ where $\epsilon_i=???$ . Then the quasi-Cartan matrix $A=\mu^{\epsilon_r}_r...\mu^{\epsilon_1}_1(A_0)$ has the following property: $A_{k,i}=-sgn(B_{k,i})B_{k,i}$ for all $i$ such that $sgn(B_{k,i})=-sgn(\cc_k)$. The quasi-Cartan matrix $A$ is also a companion.

%For any seed $\Sigma_t$, its family of $\cc$-vectors $\cc_t=(\cc_1,...,\cc_n)$ (with respect to an acyclic initial seed) can be obtained by a sequence of $\cc$-compatible mutations. 
\end{theorem}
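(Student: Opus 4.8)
The plan is to prove the two displayed formulas first, since the companion property $|A_{j,i}|=|B_{j,i}|$ for $i\ne j$ is their common consequence (each formula gives $|\cc_j^TA_0\cc_i|=|B_{j,i}|$), and the final ``in particular'' assertion then follows by a one-line sign manipulation: when $sgn(\cc_j)=-sgn(\cc_i)$, multiplying either formula through by $sgn(\cc_i)$ collapses it to $B_{j,i}=sgn(\cc_i)\cc_j^TA_0\cc_i$, regardless of which of the two cases one is in (and $B_{j,i}=0$ forces $\cc_j^TA_0\cc_i=0$). So the real content is the two formulas together with the verification that $A$ is a quasi-Cartan matrix.

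That $A$ is quasi-Cartan is quick. Since $B_0$ is skew-symmetric, $A_0$ is symmetric, so $A_{j,i}=\cc_j^TA_0\cc_i=\cc_i^TA_0\cc_j=A_{i,j}$ and $A$ is symmetric, hence symmetrizable with $D=I$. For the diagonal, Theorem~\ref{th:DWZ2-RS} identifies each $\cc_i$ with the coordinate vector of a real root of the root system attached to $A_0$; because $A_0$ is symmetric, the associated symmetric form $\langle x,y\rangle=x^TA_0y$ is invariant under all the reflections $s_i$ and takes the value $2$ on every real root, so $A_{i,i}=\cc_i^TA_0\cc_i=2$. Thus $A$ is a quasi-Cartan matrix, and the companion property will drop out once the two formulas are established.

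I would prove the two formulas by induction on the distance in $\TT_n$ from the initial vertex $t_0$. At $t_0$ one has $\cc_i=e_i$, $sgn(\cc_i)=+1$ and $A=A_0$, so both formulas reduce to the defining identity $(A_0)_{j,i}=-|(B_0)_{j,i}|$, and the base case is immediate. For the inductive step, suppose the formulas hold at $(\cc,B)$ and apply $\mu_k$. The decisive observation, read off from the inductive hypothesis, is that the $\cc$-vector mutation \eqref{eq:y-mutation} is a \emph{partial reflection}: comparing the coefficient $[sgn(\cc_k)B_{k,i}]_+$ with the hypothesized value of $\cc_k^TA_0\cc_i$ shows that $\cc'_i=\cc_i-(\cc_i^TA_0\cc_k)\cc_k=s_{\cc_k}(\cc_i)$ exactly when $sgn(B_{k,i})=sgn(\cc_k)$, while $\cc'_i=\cc_i$ when $sgn(B_{k,i})=-sgn(\cc_k)$ or $B_{k,i}=0$; and $\cc'_k=-\cc_k=s_{\cc_k}(\cc_k)$. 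Here $s_{\cc_k}(v)=v-(v^TA_0\cc_k)\cc_k$ is an isometry of $\langle\cdot,\cdot\rangle$ precisely because $\cc_k^TA_0\cc_k=2$.

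With this in hand I would expand $\cc_j'^TA_0\cc_i'$ using self-adjointness and isometry of the form: when $i,j$ lie on the same side of the partition the value is unchanged, and when they lie on opposite sides it changes by the cross-term $-(\cc_i^TA_0\cc_k)(\cc_j^TA_0\cc_k)$; the indices $i=k$ or $j=k$ are handled directly from $\cc'_k=-\cc_k$ and $\cc_k^TA_0\cc_k=2$. Each inner product occurring here is then replaced by its value from the inductive hypothesis and compared against $B'_{j,i}$ as given by \eqref{eq:matrix-mutation}, with the new labels $sgn(\cc'_i),sgn(\cc'_j)$ obtained from the partial reflection. I expect the main obstacle to be precisely this bookkeeping: one must check that the quadratic cross-term $-(\cc_i^TA_0\cc_k)(\cc_j^TA_0\cc_k)$ reproduces the quadratic correction $[B_{ik}]_+[B_{kj}]_+-[-B_{ik}]_+[-B_{kj}]_+$ of the matrix mutation, and that the sign changes in $\cc'_i,\cc'_j$ move each pair $(j,i)$ into the correct one of the two formulas. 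This splits into a moderate number of sign subcases, and confirming that every subcase lands on the asserted right-hand side is where the care is needed; the rest of the argument is formal.
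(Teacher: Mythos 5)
There is a genuine gap, and it sits exactly where the paper itself flags the difficulty. Your plan derives the companion property $|A_{j,i}|=|B_{j,i}|$ ``as a common consequence'' of the two sign formulas, but those formulas only speak when $B_{j,i}\neq 0$: they say nothing about pairs with $B_{j,i}=0$, where the companion property demands the orthogonality $\cc_j^TA_0\cc_i=0$ (the paper states this caveat explicitly right after Proposition~\ref{prop:prop1}). Your parenthetical ``$B_{j,i}=0$ forces $\cc_j^TA_0\cc_i=0$'' is precisely the unproved hard part, not a one-line consequence. The same defect breaks your induction internally: in the mixed case of your inductive step, the cross-term $B_{k,i}\,\cc_j^TA_0\cc_k$ appears, and if $B_{j,k}=0$ your inductive hypothesis gives no value for $\cc_j^TA_0\cc_k$; dually, when mutation creates a new zero entry $B'_{j,i}=0$ by cancellation ($B_{j,i}=-B_{j,k}B_{k,i}\neq 0$), you must show the quadratic cross-term cancels $\cc_j^TA_0\cc_i$ exactly, and whether it does depends on a sign constraint on the triangle $\{i,j,k\}$ that the two formulas do not enforce. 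Concretely: the formulas imply $A_{j,i}>0$ exactly when $sgn(\cc_i)=-sgn(\cc_j)$ and the arrow runs from the $+$ vertex to the $-$ vertex; an oriented triangle on which all three $\cc$-vectors have the same sign would then have all $A$-entries negative, which is perfectly consistent with both formulas, yet by Proposition~\ref{prop:adm-triangle} mutation at its middle vertex would destroy the companion property. Nothing in your local sign bookkeeping rules out such configurations; excluding them is a global statement about cycles.

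This is why the paper's argument has the shape it does. The two formulas themselves are obtained there \emph{without} induction (Proposition~\ref{prop:prop1}): since by Theorem~\ref{th:DWZ2-RS} the vectors $\cc_i$, $\cc_j$ and the mutated $\cc_i'$ are all real roots of norm $2$, expanding ${\cc'_i}^TA_0\cc'_i=2$ pins down $\cc_j^TA_0\cc_i$ directly at every seed. The actual inductive content of Theorem~\ref{th:companion0} is the companion property at the zero entries, and its inductive step is \emph{not} formal: one needs that $A=(\cc_i^TA_0\cc_j)$ is admissible (Lemma~\ref{lem:lem-cycle}, whose proof is a substantial case analysis over oriented and non-oriented cycles) in order to invoke Proposition~\ref{prop:adm-triangle} and conclude that $\mu_k^{\epsilon}(A)$ is again a companion of $\mu_k(B)$, combined with Lemma~\ref{lem:lem-A'c'} identifying $\mu_k^{\epsilon}(A)$ with $({\cc'}_i{}^TA_0{\cc'}_j)$. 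Your correct observations --- symmetry of $A$, the diagonal entries $A_{i,i}=2$ via real roots, the partial-reflection description of \eqref{eq:y-mutation} (this is exactly Lemma~\ref{lem:lem-A'c'}), and the ``in particular'' line when $B_{j,i}\neq0$ --- all survive, but to close the argument you would have to add the cycle/admissibility analysis, which is the bulk of the paper's Section~\ref{sec:proof}.
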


%Suppose that $\Sigma_t=\mu_r...\mu_1(\Sigma_0)$. Then there is a sequence of signs $\epsilon_i \in \{+,-\}$ such that 

%Suppose that $\cc_t=\mu^{\epsilon_r}_r...\mu^{\epsilon_1}_1(\cc_0)$ where $\epsilon_i=???$ . Then the quasi-Cartan matrix $A=\mu^{\epsilon_r}_r...\mu^{\epsilon_1}_1(A_0)$ has the following property: $A_{k,i}=-sgn(B_{k,i})B_{k,i}$ for all $i$ such that $sgn(B_{k,i})=-sgn(\cc_k)$. The quasi-Cartan matrix $A$ is also a companion.

%For any seed $\Sigma_t$, its family of $\cc$-vectors $\cc_t=(\cc_1,...,\cc_n)$ (with respect to an acyclic initial seed) can be obtained by a sequence of $\cc$-compatible mutations. 

We also obtain some basic combinatorial properties, including admissibility, of these quasi-Cartan companions defined by $\cc$-vectors:
\begin{theorem}\label{th:companion2}
The quasi-Cartan companion $A$ from Theorem~\ref{th:companion0} has the following properties:

\begin{itemize}
\item
%\noindent
Every directed path of the diagram $\Gamma(B)$ has at most one edge $\{i,j\}$ such that $A_{i,j}>0$.
\item
Every oriented cycle of the diagram $\Gamma(B)$ has exactly one edge $\{i,j\}$ such that $A_{i,j}>0$.

\item
Every non-oriented cycle of the diagram $\Gamma(B)$ has an even number of edges $\{i,j\}$ such that $A_{i,j}>0$.
\end{itemize}

In particular, the quasi-Cartan companion $A$ is admissible. Furthermore, any admissible quasi-Cartan companion of $B$ can be obtained from $A$ by a sequence of simultaneous sign changes in rows and columns. 

%For any seed $\Sigma_t$, its family of $\cc$-vectors $\cc_t=(\cc_1,...,\cc_n)$ (with respect to an acyclic initial seed) can be obtained by a sequence of $\cc$-compatible mutations. 
\end{theorem}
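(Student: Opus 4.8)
The plan is to deduce all three bulleted properties from a single explicit description of the sign pattern of $A$, and only then to read off admissibility and the uniqueness clause. First I would extract from Theorem~\ref{th:companion0} exactly when an off-diagonal entry is positive. For an edge of $\Gamma(B)$ oriented $i\to j$ (that is, $B_{j,i}>0$), the first bullet gives $A_{j,i}=-|B_{j,i}|<0$ whenever $sgn(\cc_j)=+1$, while the second bullet gives $A_{j,i}=-sgn(\cc_i)sgn(\cc_j)|B_{j,i}|$ when $sgn(\cc_j)=-1$, and the latter is positive precisely when $sgn(\cc_i)=+1$. Since $A$ is sign-symmetric, this yields the clean rule: an edge $i\to j$ of $\Gamma(B)$ has $A_{i,j}>0$ if and only if $sgn(\cc_i)=+1$ and $sgn(\cc_j)=-1$; that is, the positive edges are exactly the arrows running from a vertex with a positive $\cc$-vector to one with a negative $\cc$-vector. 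Writing $\epsilon_v=sgn(\cc_v)\in\{+,-\}$, the three properties become purely combinatorial statements about this two-coloring: (1) along a directed path the color changes from $+$ to $-$ at most once; (2) around an oriented cycle the $+$ vertices form a single contiguous arc, so there is exactly one $+\to-$ transition; (3) around a non-oriented cycle the number of arrows running from a $+$ to a $-$ vertex is even. Properties (2) and (3) together are exactly admissibility, so the ``in particular'' clause follows at once.

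The heart of the matter is to prove the coloring properties, which I would do by induction on the mutation distance $d$ from the acyclic initial seed $B_0$. For $d=0$ all $\cc$-vectors are standard basis vectors, so $\epsilon\equiv +$, the companion is $A_0$ with no positive edges, and (1)--(3) hold trivially. For the inductive step I would compare $(\cc,B)$ with $(\cc',B')=\mu_k(\cc,B)$, using the quiver-mutation rule to describe how $\Gamma(B')$ arises from $\Gamma(B)$ (reversal of the arrows at $k$, creation of arrows through $k$, and cancellation of two-cycles) together with the mutation rule~\eqref{eq:y-mutation}, which flips $\epsilon_k$ and controls the colors $\epsilon'_i$ of the remaining vertices. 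The key point is that a directed path or cycle of $\Gamma(B')$ can be traced back, through these local moves, to a path or cycle of $\Gamma(B)$ whose positive-edge count is governed by the inductive hypothesis, with the single sign flip at $k$ altering that count in a controlled, parity-preserving way for cycles. I expect this bookkeeping to be the main obstacle: one must handle the arrows newly created through $k$ together with the simultaneous recoloring of their endpoints, verifying that no second positive edge can be forced onto a directed path and that the parity of positive edges on each cycle is unchanged. The hypothesis that $B$ is mutation-equivalent to an acyclic matrix is essential here, since the statements fail for a general skew-symmetric $B$ carrying an arbitrary companion.

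For the final uniqueness clause I would invoke the theory of switchings of signed graphs. A simultaneous sign change in the rows and columns indexed by a set $S$ flips the sign of $A_{i,j}$ exactly on the edges $\{i,j\}$ with $|S\cap\{i,j\}|=1$, preserves $|A_{i,j}|=|B_{i,j}|$ and the quasi-Cartan property, and flips an even number of edges on every cycle (a cycle meets any cut evenly), hence preserves the parity of positive edges on each cycle and so preserves admissibility. Now any admissible companion $A''$ of $B$ differs from $A$ only in the signs attached to the edges of $\Gamma(B)$, and admissibility fixes, for every induced cycle, the parity of the number of positive edges, hence the product of the edge signs around that cycle. Since the induced cycles span the cycle space, the two signings have equal sign products on all cycles, and by the standard classification of balanced signed graphs they therefore lie in the same switching class. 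Consequently $A''$ is obtained from $A$ by a sequence of simultaneous row-and-column sign changes, which completes the argument.
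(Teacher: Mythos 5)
Your opening reduction is correct and attractive: from the two sign formulas of Theorem~\ref{th:companion0} one does get the clean rule that an arrow $i\to j$ of $\Gamma(B)$ (i.e. $B_{j,i}>0$) carries $A_{i,j}>0$ precisely when $sgn(\cc_i)=+1$ and $sgn(\cc_j)=-1$; this is exactly the content of Proposition~\ref{prop:prop1}, repackaged as a two-coloring of vertices. Your switching-theoretic proof of the final uniqueness clause is also sound and is genuinely different from the paper, which simply cites \cite[Theorem~2.11]{S3}: simultaneous row-and-column sign changes are switchings, admissibility pins down the sign product around every chordless cycle, chordless cycles span the cycle space over $\FF_2$, and two signings with equal products on all cycles are switching-equivalent. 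That part stands on its own.

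The genuine gap is the inductive step for the three bulleted properties, which you explicitly defer (``I expect this bookkeeping to be the main obstacle''); that bookkeeping \emph{is} the theorem, and it occupies essentially all of Section~3 of the paper. Two concrete problems with your plan. First, your proposed invariant is wrong as stated: you want to check that ``the parity of positive edges on each cycle is unchanged'' under mutation, but mutation at $k$ changes the set of cycles (arrows through $k$ are created and cancelled) and flips cycles between oriented and non-oriented, so the target parity itself flips between odd and even; moreover for oriented cycles the claim is ``exactly one'', which requires the directed-path property as a separate ingredient rather than a parity count. Second, a single induction on mutation distance from $t_0$, tracing each path or cycle of $\Gamma(B')$ back to one of $\Gamma(B)$, does not go through: the paper instead runs auxiliary inductions on the \emph{length} of the configuration, mutating at an internal vertex of the path or cycle itself (Lemma~\ref{lem:lem-cycle0} and the first half of Lemma~\ref{lem:lem-cycle}), justified by Lemma~\ref{lem:lem-A'c'}, which guarantees that $\mu_k^{\epsilon}(A)$ is again of the form $({\cc'}_i{}^TA_0{\cc'}_j)$; and for oriented cycles it proves a mutation-invariance statement by contradiction, with a case analysis on how $k$ attaches to the cycle (adjacent versus non-adjacent attachment vertices, oriented versus non-oriented triangles through $k$, and the comparison of $|B'_{l,k}B'_{k,l+1}|$ with $|B'_{l,l+1}|$), crucially feeding in the already-established non-oriented parity property. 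Neither this structure nor any substitute for it appears in your sketch, so as written the proposal does not establish the three bullets, only reformulates them.
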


%\noindent
%Let us note that a skew-symmetrizable matrix $B$ have many other admissible quasi-Cartan companions. However they all can be obtained from $A$ as follows: 
%\begin{corollary}\label{cor:admissible}
%Suppose that $B$ is mutation-equaivalent to a $B_0$ such that $\Gamma(B_0)$ is acyclic. Then any admissible quasi-Cartan companion of $B$ can be obtained from the quasi-Cartan companion $A$ in Theorem~\ref{th:companion0} by a sequence of simultaneous sign changes in rows and columns. 
%\end{corollary}

We can describe this result in terms related to the theory of quivers with potential. More precisely,
motivated by \cite{HI}, let us call a set $C$ of edges in $\Gamma_B$ an ''admissible cut'' if every oriented cycle contains exactly one edge that belongs to $C$ and every non-oriented cycle contains exactly an even number of edges in $C$. (Note that the definition of a cut of edges in \cite{HI} does not have a condition on non-oriented cycles). Then, for $B$ as in Theorem~\ref{th:companion0}, its diagram (or quiver) $\Gamma(B)$ has an admissible cut of edges as follows:
 
\begin{corollary}\label{cor:cut}
In the set-up of Theorem~\ref{th:companion0}, let $C$ be the edges $\{i,j\}$ in $\Gamma(B)$ such that $A_{i,j}>0$. Then $C$ is an admissible cut. 
%(Thus, every quiver mutation equivalent to an acyclic one has an admissible cut of edges.???) Define mutation ???
\end{corollary}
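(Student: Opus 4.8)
The plan is to observe that this corollary is essentially a translation of Theorem~\ref{th:companion2} into the language of admissible cuts, so the real work has already been done. The definition of an admissible cut imposes exactly two requirements on a set $C$ of edges of $\Gamma(B)$: that every oriented cycle contains exactly one edge of $C$, and that every non-oriented cycle contains an even number of edges of $C$. Since here $C$ is defined to be the set of edges $\{i,j\}$ with $A_{i,j}>0$, the first step is simply to recall that this is precisely the set whose cycle-incidence is governed by Theorem~\ref{th:companion2}.

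Next I would match the two definitions bullet by bullet. The second assertion of Theorem~\ref{th:companion2} says that every oriented cycle of $\Gamma(B)$ has exactly one edge $\{i,j\}$ with $A_{i,j}>0$, which is exactly the statement that every oriented cycle contains exactly one edge of $C$; this is the first defining condition of an admissible cut. The third assertion of Theorem~\ref{th:companion2} says that every non-oriented cycle has an even number of edges with $A_{i,j}>0$, which is exactly the statement that every non-oriented cycle contains an even number of edges of $C$; this is the second defining condition. Taken together, $C$ satisfies both conditions, and is therefore an admissible cut.

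Because the genuine combinatorial content is entirely contained in Theorem~\ref{th:companion2}, I do not expect any real obstacle here. The only point meriting a remark is that the notion of an \emph{edge} of $\Gamma(B)$ used in the definition of a cut agrees with the unordered pair $\{i,j\}$ appearing in Theorem~\ref{th:companion2}: an edge of the diagram is a single weighted edge regardless of arrow multiplicity, so the sign condition $A_{i,j}>0$ is well defined on edges and the correspondence between the two formulations is exact. With this understood, the corollary follows immediately.
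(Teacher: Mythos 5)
Your proposal is correct and matches the paper exactly: the paper offers no separate proof of Corollary~\ref{cor:cut}, treating it precisely as you do, namely as an immediate reformulation of the second and third assertions of Theorem~\ref{th:companion2} in the language of admissible cuts. Your bullet-by-bullet matching of the two defining conditions, together with the remark that the sign condition $A_{i,j}>0$ is well defined on edges of $\Gamma(B)$, is all that is needed.
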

\noindent
Let us also note that, in the theory of cluster categories, quivers that can be obtained from an acyclic quiver by a sequence of mutations is called ''cluster tilted'' \cite{BMR}. Thus, every cluster tilted quiver has an admissible cut of edges.

We also give an interpretation of the $\cc$-vectors in terms related to quasi-Cartan companions. For this purpose, let us first discuss an extension of the mutation operation to quasi-Cartan companions:
\begin{definition} 
\label{def:comp-mut} 
Suppose that $B$ is a skew-symmetrizable matrix and let $A$ be a quasi-Cartan companion of $B$. 
Let $k$ be an index. For each sign $\epsilon=\pm1$, "the $\epsilon$-mutation of $A$ at $k$" is the quasi-Cartan matrix 
$\mu^\epsilon(A)=A'$ such that for any $i,j \ne k$: $A'_{i,k}=\epsilon sgn(B_{k,i})A_{i,k}$, $A'_{k,j}=\epsilon sgn(B_{k,j})A_{k,j}$, $A'_{i,j}=A_{i,j}-sgn(A_{i,k}A_{k,j})[B_{i,k}B_{k,j}]_+$. \footnote{Note that $A'$ may not be a quasi-Cartan companion of $B'=\mu_k(B)$, see Proposition~\ref{prop:adm-triangle}.}
%The quasi-Cartan matrix $A'$ is equivalent to $A$. 
%It is a quasi-Cartan companion of $\mu_k(B)$ if $A$ is admissible \cite[Proposition~3.2]{BGZ}.???(INCLUDE NOW???)
\end{definition}

\noindent
Note that for $\epsilon=-1$, one obtains the formula in \cite[Proposition~3.2]{BGZ}. Also note that if $D$ is a skew-symmetrizing matrix of $B$, then $D$ is also a symmetrizing matrix for $A$, with $DA=S$ symmetric. If we consider $S$ as the Gram matrix of a symmetric bilinear form on $\ZZ^n$ with respect to a basis $\mathcal{B}=\{e_1,...,e_n\}$, then $DA'=S'$ is the Gram matrix of the same symmetric bilinear form with respect to the basis $\mathcal{B'}=\{e'_1,e'_2,...,e'_n\}$ defined as follows: $e'_k=-e_k$; $e'_i=e_i-A_{k,i}e_k$ if $\epsilon B_{k,i}>0$; $e'_i=e_i$ if else. We write $\mu_k(\mathcal{B})=\mathcal{B}'$. We discuss some other properties of the mutations of quasi-Cartan companions in Section~\ref{sec:pre}.
%$\mathcal{A'}=\{e'_1,e'_2,...,e'_n\}$ defined as follows: $e'_k=-e_k$; $e'_i=e_i-A_{k,i}e_k$ if $B_{k,i}<0$; $e'_i=e_i$ if else. We write $\mu_k(A)=A'$.

\begin{corollary}\label{cor:companion}
Let $\Sigma_t=(\cc,B)$ be a $Y$-seed with respect to an acyclic initial seed $\Sigma_0=(\cc_0,B_0)$ such that $B_0$ is skew-symmetric. Let $\epsilon=sgn(\cc_k)$. Then, for $t \overunder{k}{} t'$ with $\Sigma_{t'}=\mu_k(\cc, B)=(\cc', B')$, we have the following:
\begin{itemize}
\item
$\cc'=\mu^{\epsilon}_k(\cc)$,  
\item
$A_{t'}=\mu^{\epsilon}_k(A_t)$.
\item
if $\cc'_i\ne \cc_i$, then $\cc'_i=s_{\cc_k}(\cc_i)$, where $s_{\cc_k}$ is the reflection with respect to the real root $\cc_k$ and $\ZZ^n$ is identified with the root lattice.
\end{itemize}

%so admissibility is preserved under the mutation of q.Cartan companions ???. %In particular, the companion $A$ in Theorem ??? is admissible.

%it has an admissible companion obtained by a sequence of mutations...
%For any seed $\Sigma_t$, its $\cc$-vectors $\cc_1,...,\cc_n$ (with respect to an acyclic (bipartite? no!) initial seed) can be obtained by a sequence of mutations. In particular any $\cc$ vector is a root.

\end{corollary}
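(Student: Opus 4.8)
The plan is to read the $\cc$-vector tuple $\cc=(\cc_1,\dots,\cc_n)$ as the companion basis $\mathcal{B}=\{\cc_1,\dots,\cc_n\}$ and to deduce all three bullets from Theorem~\ref{th:companion0} by matching the $Y$-seed mutation rule~\eqref{eq:y-mutation} against the basis-mutation rule $\mu_k(\mathcal{B})=\mathcal{B}'$ attached to Definition~\ref{def:comp-mut}. For the first bullet I would start from the observation that, since $\epsilon=sgn(\cc_k)$, the condition $\epsilon B_{k,i}>0$ appearing in the basis mutation is exactly the hypothesis $sgn(B_{k,i})=sgn(\cc_k)$ of the first case of Theorem~\ref{th:companion0}. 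In that case the theorem (with $j=k$) gives $A_{k,i}=\cc_k^TA_0\cc_i=-sgn(\cc_k)B_{k,i}=-\epsilon B_{k,i}$, so the mutated basis vector $e'_i=e_i-A_{k,i}e_k$ becomes $\cc_i+\epsilon B_{k,i}\cc_k=\cc_i+[\epsilon B_{k,i}]_+\cc_k$, which is precisely $\cc'_i$. When $\epsilon B_{k,i}\le0$ both rules leave $\cc_i$ fixed, and for $i=k$ both return $-\cc_k$; hence $\mathcal{B}'=\mu^\epsilon_k(\mathcal{B})$, i.e. $\cc'=\mu^\epsilon_k(\cc)$.

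For the second bullet I would use that, by definition, $A_t=(\cc_i^TA_0\cc_j)$ is the Gram matrix of the symmetric form with matrix $A_0$ (in the simple-root basis) computed in the basis $\mathcal{B}$, while Theorem~\ref{th:companion0} applied at $t'$ identifies $A_{t'}=((\cc'_i)^TA_0\cc'_j)$ as the Gram matrix of the same form in $\mathcal{B}'$. By the first bullet $\mathcal{B}'=\mu_k(\mathcal{B})$, so the remark following Definition~\ref{def:comp-mut}, which says that a basis mutation $\mu_k(\mathcal{B})$ transforms the Gram matrix by the $\epsilon$-mutation $\mu^\epsilon_k$, yields $A_{t'}=\mu^\epsilon_k(A_t)$ at once; here $B$ skew-symmetric lets me take $D=I$, so that $S=A_t$ and $S'=A_{t'}$.

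For the third bullet I would note that $\cc'_i\ne\cc_i$ forces, by the first bullet, $\epsilon B_{k,i}>0$ and $\cc'_i=\cc_i-A_{k,i}\cc_k$, the degenerate case $i=k$ being included since $A_{k,k}=2$ gives $\cc_k-2\cc_k=-\cc_k=\cc'_k$. Because $A$ is a quasi-Cartan companion we have $\cc_k^TA_0\cc_k=A_{k,k}=2$, and by Theorem~\ref{th:DWZ2-RS} the vector $\cc_k$ is a real root, so the reflection in $\cc_k$ acts on the root lattice by $s_{\cc_k}(x)=x-(x^TA_0\cc_k)\cc_k$. Evaluating at $x=\cc_i$ and using the symmetry $A_{i,k}=A_{k,i}$ of $A$ gives $s_{\cc_k}(\cc_i)=\cc_i-A_{k,i}\cc_k=\cc'_i$, as desired.

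Since Theorems~\ref{th:companion0} and~\ref{th:DWZ2-RS} are already available, the remaining content is sign bookkeeping, and I expect the one genuinely delicate step to be the general change-of-basis identity underlying the second bullet: one must verify that a basis mutation transforms the Gram matrix by $\mu^\epsilon_k$ on the nose --- with the same $\epsilon=sgn(\cc_k)$ and with the correction term $[B_{i,k}B_{k,j}]_+$ in the correct position --- rather than merely up to a sign convention. This reduces to expanding $\langle e'_i,e'_j\rangle$ for the three types of index pairs $(i,k)$, $(k,j)$, and $(i,j)$ with $i,j\ne k$, and comparing the result with Definition~\ref{def:comp-mut}.
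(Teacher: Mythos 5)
Your proof is correct and follows essentially the same route as the paper, which derives the corollary from Lemma~\ref{lem:lem-A'c'}: there too the identity $[sgn(\cc_k)B_{k,i}]_+=-A_{k,i}$ from Proposition~\ref{prop:prop1} is used to match the $\cc$-vector mutation \eqref{eq:y-mutation} with the basis mutation $\mu_k(\mathcal{B})=\mathcal{B}'$ attached to Definition~\ref{def:comp-mut}, and the reflection formula $\cc'_i=\cc_i-A_{k,i}\cc_k=s_{\cc_k}(\cc_i)$ is obtained from the standard properties of real roots. The only difference is that you spell out the Gram-matrix change-of-basis verification for the second bullet, which the paper asserts without proof in the remark following Definition~\ref{def:comp-mut}; your expansion of $\langle e'_i,e'_j\rangle$ in the three index-pair cases is exactly the check needed there.
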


For an arbitrary admissible quasi-Cartan companion, we have the following property: 

\begin{corollary}\label{cor:admissible2}
Suppose that $B$ is a skew-symmetric matrix which is mutation-equivalent to $B_0$ such that $\Gamma(B_0)$ is acyclic. Suppose also that $S$ is an admissible quasi-Cartan companion of $B$. Then, for any $\epsilon=\pm 1$, the matrix $S'=\mu^\epsilon_k(S)$ is an admissible quasi-Cartan companion of $B'=\mu_k(B)$.  

\end{corollary}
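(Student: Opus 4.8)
The plan is to reduce everything to the distinguished companion $A_t=(\cc_i^TA_0\cc_j)$ built from $\cc$-vectors and to exploit how the companion mutation $\mu^\epsilon_k$ of Definition~\ref{def:comp-mut} interacts with simultaneous sign changes of rows and columns. Throughout, I identify a simultaneous sign change governed by signs $\delta_1,\dots,\delta_n\in\{\pm1\}$ with conjugation $M\mapsto \Delta M\Delta$ by the diagonal sign matrix $\Delta=\mathrm{diag}(\delta_1,\dots,\delta_n)$; since $\Delta^2=I$, this fixes diagonal entries and sends $M_{ij}\mapsto \delta_i\delta_j M_{ij}$, so it carries quasi-Cartan companions of a fixed skew-symmetric matrix to quasi-Cartan companions of the same matrix.

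First I would realize $B$ inside a $Y$-seed pattern. Because $B$ is mutation-equivalent to the acyclic $B_0$, I follow the corresponding mutation sequence in the pattern determined by $\Sigma_0=(\cc_0,B_0)$ (which exists by Theorem~\ref{th:DWZ2-RS}) to reach a vertex $t$ with $B=B_t$; let $t'$ be its neighbour across the edge labelled $k$, so that $B'=\mu_k(B)=B_{t'}$, and set $\epsilon_0=sgn(\cc_k)$. By Theorem~\ref{th:companion0} the matrix $A_t$ is a companion of $B$, by Theorem~\ref{th:companion2} it is admissible, and moreover every admissible companion of $B$ --- in particular the given $S$ --- has the form $S=\Delta A_t\Delta$ for some diagonal sign matrix $\Delta$. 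Likewise $A_{t'}=\mu^{\epsilon_0}_k(A_t)$ by Corollary~\ref{cor:companion}, and it is an admissible companion of $B'$ by Theorem~\ref{th:companion2} applied at $t'$.

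The key algebraic input is two identities read off from Definition~\ref{def:comp-mut}. The first is that $\mu^\epsilon_k$ commutes with conjugation by any diagonal sign matrix, $\mu^\epsilon_k(\Delta M\Delta)=\Delta\,\mu^\epsilon_k(M)\,\Delta$: the entries with $i,j\ne k$ transform by the factor $\delta_i\delta_j$ because $sgn(\delta_i\delta_k A_{ik}\cdot\delta_k\delta_j A_{kj})=\delta_i\delta_j\,sgn(A_{ik}A_{kj})$ (here $\delta_k^2=1$), while each $k$-row and $k$-column entry acquires exactly the factor $\delta_i\delta_k$ or $\delta_k\delta_j$. The second is that flipping $\epsilon$ amounts to a sign change at the single index $k$, $\mu^{-\epsilon}_k(M)=\Delta_k\,\mu^\epsilon_k(M)\,\Delta_k$, where $\Delta_k$ changes the sign at $k$ alone; this is immediate since the entries $A'_{i,j}$ ($i,j\ne k$) in Definition~\ref{def:comp-mut} are independent of $\epsilon$ whereas $A'_{i,k},A'_{k,j}$ are odd in $\epsilon$.

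Finally I would assemble these. Using the first identity, $S'=\mu^\epsilon_k(S)=\mu^\epsilon_k(\Delta A_t\Delta)=\Delta\,\mu^\epsilon_k(A_t)\,\Delta$; and $\mu^\epsilon_k(A_t)$ equals $A_{t'}$ when $\epsilon=\epsilon_0$ and equals $\Delta_k A_{t'}\Delta_k$ otherwise, by the second identity. In both cases $\mu^\epsilon_k(A_t)$ is a simultaneous sign change of the admissible companion $A_{t'}$ of $B'$, hence is again an admissible companion of $B'$, and therefore so is its further sign change $S'$. The remaining point, which I would record as a short lemma, is that simultaneous sign changes preserve admissibility: along any cycle $Z$ of $\Gamma(B')$ the product $\prod_\ell\delta_{v_\ell}\delta_{v_{\ell+1}}$ telescopes to $1$, so an even number of companion edge-signs flip and the parity of positive edges on $Z$ is unchanged (whether $Z$ is oriented or non-oriented is a property of $B'$ alone, unaffected). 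I expect the only delicate point to be this sign bookkeeping --- in particular the observation that an \emph{incorrect} choice of $\epsilon$ is harmlessly absorbed by $\Delta_k$ into the sign-change freedom already relating $S$ to $A_t$, so that the conclusion holds for both values of $\epsilon$ with no extra hypothesis.
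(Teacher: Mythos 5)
Your proof is correct and follows essentially the same route as the paper, which likewise deduces Corollary~\ref{cor:admissible2} by writing any admissible companion $S$ as a simultaneous row/column sign change of the distinguished companion $A_t=(\cc_i^TA_0\cc_j)$ (via Theorem~\ref{th:companion2}), observing that such sign changes commute with the mutation $\mu_k^\epsilon$, and absorbing the choice of $\epsilon$ into a sign change at $k$ (your second identity is exactly Proposition~\ref{prop:+-}(i)). The only difference is that you spell out the details the paper leaves as easy checks, including the telescoping argument that sign changes preserve admissibility.
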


As an application of our results, let us note that Theorem~\ref{th:admissible} could be useful for recognizing quivers which can be obtained from an acyclic one by a sequence of mutations (i.e. cluster tilted quivers): if a quiver (viewed as the diagram of a skew-symmetric matrix) does not have an admissible quasi-Cartan companion, then it can not be obtained from any acyclic quiver by a sequence of mutations.
For example, the quiver in Figure~\ref{fig:nonadm} can not be obtained from any acyclic quiver by a sequence of mutations because it does not have an admissible quasi-Cartan companion.

%Let us also note that the existence of an admissible companion of $B$ puts a combinatorial condition on the diagram (or quiver) of $B$. If such a companion does not exist then the diagram (quiver) cannot be obtained from an acyclic diagram (quiver) by a sequence of mutations. 
%Example...
In accordance with the general conjectures on $\cc$-vectors \cite{CAIV,RS}, we conjecture that the above results also hold for skew-symmetrizable matrices.
We prove our results in Section~\ref{sec:proof} after some preparation in Section~\ref{sec:pre}.

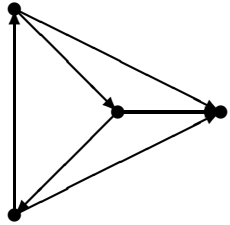
\begin{figure}[ht]

\setlength{\unitlength}{2.6pt}

\begin{center}

\begin{picture}(60,30)(-30,0)

%\put(30,20){\vector(1,-1){20}}
%\put(50,0){\vector(-1,-1){20}}
%\put(53,0){\makebox(0,0){$b_7$}}
%\put(56,23){\makebox(0,0){$b_4$}}
%\put(83,0){\makebox(0,0){$b_5$}}
%\put(56,-23){\makebox(0,0){$b_6$}}
%\put(26,-23){\makebox(0,0){$b_6$}}
%\put(7,0){\makebox(0,0){$b_2$}}

%\put(50,0){\circle*{2.0}}

\thicklines

\put(-30,0){\circle*{2.0}}
\put(-15,15){\circle*{2.0}}
%\put(10,0){\circle*{2.0}}
%\put(30,0){\circle*{2.0}}
\put(-30,30){\circle*{2.0}}
\put(0,15){\circle*{2.0}}
%\put(-10,20){\circle*{2.0}}
%\put(-20,10){\circle*{2.0}}

%\put(70,0){\circle*{2.0}}
%\put(90,0){\circle*{2.0}}
%\put(70,20){\circle*{2.0}}
%\put(90,20){\circle*{2.0}}
%\put(80,10){\circle*{2.0}}
%\put(70,40){\circle*{2.0}}

%\put(130,20){\circle*{2.0}}

%\put(-50,20){\line(1,0){20}}
%\put(-50,20){\line(1,-1){20}}

\put(-30,0){\vector(0,1){30}}
\put(-30,30){\vector(2,-1){30}}
\put(-30,30){\vector(1,-1){15}}
\put(-30,0){\vector(2,1){30}}
\put(-15,15){\vector(-1,-1){15}}
\put(-15,15){\vector(1,0){15}}

%\put(110,40){\line(-1,-1){20}}
\end{picture}

\end{center}

\caption{a diagram which does not have an admissible quasi-Cartan companion} 

\label{fig:nonadm}

\end{figure}

\section{Preliminaries}
\label{sec:pre}

In this section, we will recall and prove some statements that we will use to prove our results. First, let us note the following properties of the mutation of a quasi-Cartan companions, which can be easily checked using the definitions: 

\begin{proposition} 
\label{prop:+-} 
Suppose that $B$ is a skew-symmetrizable matrix and let $A$ be a quasi-Cartan companion of $B$. Then we have the following:

(i) $\mu_k^\epsilon(A)$ and $\mu_k^{-\epsilon}(A)$ can be obtained from each other by simultaneously multiplying the $k$-th row and the $k$-th column by $-1$.

(ii) Suppose that $A'=\mu_k^\epsilon(A)$ is a quasi-Cartan companion of $\mu_k(B)$. Then $\mu_k^{-\epsilon}(A')=A$. 

%$\mu_k^\epsilon(A)$ and $\mu_k^{-\epsilon}(A)$ are inverse to each other, i.e. $\mu_k^\epsilon(A)$ and $\mu_k^{-\epsilon}(A)$

%$\cc_j^TS\cc_i=\pm B_{j,i} $
\end{proposition}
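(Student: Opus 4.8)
The plan is to prove both parts by direct computation from Definition~\ref{def:comp-mut}, using two simple inputs: the matrix--mutation rule~\eqref{eq:matrix-mutation} for the $k$-th row and column, namely $B'_{i,k}=-B_{i,k}$ and $B'_{k,j}=-B_{k,j}$ for $i,j\ne k$, and the sign relation $sgn(B_{i,k})=-sgn(B_{k,i})$ coming from skew-symmetrizability of $B$ (if $DB$ is skew-symmetric with $D$ having positive diagonal, then $D_iB_{i,k}=-D_kB_{k,i}$). I would also record at the outset that since $A$ is a quasi-Cartan companion of $B$ one has $|A_{i,j}|=|B_{i,j}|$, so $A_{i,k}=0$ precisely when $B_{i,k}=0$; this takes care uniformly of all degenerate cases where a sign would be undefined.

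For part (i), I would simply observe how $\epsilon$ enters the three families of formulas in Definition~\ref{def:comp-mut}. The entries $A'_{i,j}$ with $i,j\ne k$, together with the diagonal entries, do not involve $\epsilon$ at all, so they agree for $\mu_k^\epsilon(A)$ and $\mu_k^{-\epsilon}(A)$. The only $\epsilon$-dependent entries are $A'_{i,k}=\epsilon\,sgn(B_{k,i})A_{i,k}$ and $A'_{k,j}=\epsilon\,sgn(B_{k,j})A_{k,j}$, each of which is multiplied by $-1$ when $\epsilon$ is replaced by $-\epsilon$. Thus $\mu_k^{-\epsilon}(A)$ is obtained from $\mu_k^{\epsilon}(A)$ by negating exactly the off-diagonal entries in row $k$ and column $k$, which is precisely simultaneous multiplication of the $k$-th row and $k$-th column by $-1$ (the $(k,k)$ entry is multiplied by $(-1)^2=1$ and stays equal to $2$). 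This disposes of (i).

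For part (ii), I would compute $A''=\mu_k^{-\epsilon}(A')$ entrywise, reading the formulas of Definition~\ref{def:comp-mut} relative to $B'=\mu_k(B)$, which is legitimate precisely because of the standing hypothesis that $A'$ is a companion of $B'$. For the row and column through $k$, using $sgn(B'_{k,i})=-sgn(B_{k,i})$ together with $A'_{i,k}=\epsilon\,sgn(B_{k,i})A_{i,k}$ gives $A''_{i,k}=(-\epsilon)\,sgn(B'_{k,i})\,A'_{i,k}=(-\epsilon)(-sgn(B_{k,i}))(\epsilon\,sgn(B_{k,i}))A_{i,k}=A_{i,k}$ whenever $B_{k,i}\ne 0$, and both sides vanish when $B_{k,i}=0$; symmetrically $A''_{k,j}=A_{k,j}$. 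For $i,j\ne k$ I need the correction term to reverse itself: since $B'_{i,k}B'_{k,j}=(-B_{i,k})(-B_{k,j})=B_{i,k}B_{k,j}$ we have $[B'_{i,k}B'_{k,j}]_+=[B_{i,k}B_{k,j}]_+$, while $sgn(A'_{i,k}A'_{k,j})=sgn(B_{k,i})\,sgn(B_{k,j})\,sgn(A_{i,k}A_{k,j})$. Hence $A''_{i,j}=A_{i,j}$ will follow once I check that $sgn(B_{k,i})\,sgn(B_{k,j})=-1$ on the support where $[B_{i,k}B_{k,j}]_+>0$; but there $B_{i,k}B_{k,j}>0$, and skew-symmetrizability gives $sgn(B_{k,i})\,sgn(B_{k,j})=-sgn(B_{i,k})\,sgn(B_{k,j})=-sgn(B_{i,k}B_{k,j})=-1$, so the two correction terms cancel and $A''_{i,j}=A_{i,j}$, completing the proof.

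I do not expect a genuine obstacle here; the content is careful sign bookkeeping, and the one step that must not be skipped is the use of skew-symmetrizability, which is exactly what forces the two correction terms to cancel in part (ii) (and, incidentally, guarantees $[B_{i,k}B_{k,i}]_+=0$, so that the diagonal entries remain equal to $2$ and the outputs are genuine quasi-Cartan matrices). The degenerate cases where some $B$-entry vanishes are handled throughout by the companion identity $|A_{i,j}|=|B_{i,j}|$.
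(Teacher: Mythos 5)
Your proof is correct, and it is precisely the direct verification the paper has in mind: the paper offers no written argument for Proposition~\ref{prop:+-}, stating only that it ``can be easily checked using the definitions.'' Your sign bookkeeping is complete and sound --- in particular you correctly isolate the two essential points, namely that only the $k$-th row and column entries of Definition~\ref{def:comp-mut} depend on $\epsilon$ (giving (i)), and that skew-symmetrizability forces $sgn(B_{k,i})\,sgn(B_{k,j})=-1$ on the support of $[B_{i,k}B_{k,j}]_+$, which makes the two correction terms cancel in (ii), with the degenerate vanishing cases handled via $|A_{i,j}|=|B_{i,j}|$.
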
 

Note that for $\epsilon=-1$, the mutation $\mu_k^\epsilon(A)$ has been defined in \cite[Proposition~3.2]{BGZ}, where the following statement has also been given:
% \cite[Proposition~3.2]{BGZ}:
\begin{proposition} 
\label{prop:adm-triangle} 
Suppose that $B$ is a skew-symmetrizable matrix and let $A$ be a quasi-Cartan companion of $B$. Then $A'=\mu_k^\epsilon(A)$ is a quasi-Cartan companion of $\mu_k(B)$ if and only if, for any triangle $T$ in $\Gamma(B)$ that contains $k$, the following holds: if $T$ is oriented (resp. non-oriented), there is exactly an odd (resp. even) number of edges $\{i,j\}$ such that $A_{i,j}>0$.

In particular, if $A$ is admissible, then $A'$ is a quasi-Cartan companion of $\mu_k(B)$.
%$\cc_j^TS\cc_i=\pm B_{j,i} $
\end{proposition}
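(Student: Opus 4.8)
The plan is to reduce the companion condition to a purely local computation on the triangles through $k$, after observing that all of the other defining properties of a quasi-Cartan companion come for free. First I would record that $A'=\mu_k^\epsilon(A)$ is always a quasi-Cartan \emph{matrix}: as noted following Definition~\ref{def:comp-mut}, if $D$ skew-symmetrizes $B$ then $DA'=S'$ is the Gram matrix of the same symmetric bilinear form in the mutated basis $\mathcal{B}'$, hence symmetric, so $A'$ is symmetrizable (and in particular sign-symmetric). Its diagonal entries are unchanged, because $[B_{i,k}B_{k,i}]_+=0$ for every $i$ by skew-symmetrizability, so they stay equal to $2$. Consequently $A'$ is a quasi-Cartan companion of $B'=\mu_k(B)$ \emph{if and only if} $|A'_{i,j}|=|B'_{i,j}|$ for all $i\ne j$, and this single family of equalities is all that must be checked.

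Next I would dispose of every entry except the one opposite $k$ in a triangle. For entries meeting $k$ we have $|A'_{i,k}|=|A_{i,k}|=|B_{i,k}|=|B'_{i,k}|$ (and symmetrically for $A'_{k,j}$), since the $\epsilon$-mutation only alters the \emph{sign} of these entries while $B'_{i,k}=-B_{i,k}$. For $i,j\ne k$, the $B$-correction $[B_{i,k}]_+[B_{k,j}]_+-[-B_{i,k}]_+[-B_{k,j}]_+$ and the $A$-correction $-sgn(A_{i,k}A_{k,j})[B_{i,k}B_{k,j}]_+$ both vanish unless $B_{i,k}B_{k,j}>0$, i.e.\ unless $k$ is a transit vertex of a directed path $i-k-j$. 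When they vanish, $|A'_{i,j}|=|A_{i,j}|=|B_{i,j}|=|B'_{i,j}|$ follows directly from $A$ being a companion of $B$. When $B_{i,k}B_{k,j}>0$ but $\{i,j\}$ is not an edge of $\Gamma(B)$ (so $A_{i,j}=B_{i,j}=0$), both new entries acquire the common magnitude $|B_{i,k}B_{k,j}|$, so again the equality holds. Hence the only entries that can obstruct the companion property are the $\{i,j\}$ for which $\{i,j,k\}$ is a genuine triangle with $B_{i,k}B_{k,j}>0$.

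The heart of the argument is the sign bookkeeping on such a triangle, which I expect to be the main obstacle. Write $p=|B_{i,j}|=|A_{i,j}|$ and $q=|B_{i,k}B_{k,j}|=|A_{i,k}A_{k,j}|>0$. Since $B_{i,k}B_{k,j}>0$, the $B$-correction equals $sgn(B_{i,k})\,q$, so $|B'_{i,j}|$ is $p+q$ or $|p-q|$ according as $sgn(B_{i,j})=sgn(B_{i,k})$ or not; a direct inspection of the edge directions in $\Gamma(B)$ identifies these two alternatives with the triangle $\{i,j,k\}$ being non-oriented or oriented, respectively. Likewise the $A$-correction equals $-sgn(A_{i,k}A_{k,j})\,q$, so $|A'_{i,j}|$ is $p+q$ or $|p-q|$ according as $sgn(A_{i,j}A_{i,k}A_{k,j})=-1$ or $+1$; and the product of the three edge-signs of $A$ on the triangle is $-1$ exactly when an even number of them are positive, which is precisely the parity of the number of edges $\{i,j\}$ with $A_{i,j}>0$. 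Comparing the two, $|A'_{i,j}|=|B'_{i,j}|$ holds if and only if the triangle is oriented with an odd number of positive edges, or non-oriented with an even number of positive edges.

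Assembling these observations gives the stated equivalence, the real content resting on the transit triangles; the triangles with $B_{i,k}B_{k,j}<0$, where $k$ is a source or sink, are non-oriented and leave the opposite entry unchanged, so the condition on them neither helps nor obstructs and may be carried along harmlessly. The final assertion is then immediate: if $A$ is admissible, then by definition every oriented $3$-cycle has an odd number of positive edges and every non-oriented $3$-cycle an even number, so the orientation/parity condition holds for all triangles containing $k$, whence $A'=\mu_k^\epsilon(A)$ is a quasi-Cartan companion of $\mu_k(B)$.
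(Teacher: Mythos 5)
Your computational core is sound, and it is in fact the only argument available: the paper does not prove Proposition~\ref{prop:adm-triangle} at all, but quotes it from \cite{BGZ}, so your direct verification is the expected one. The individual steps check out: $A'$ is always a quasi-Cartan matrix (the Gram-matrix remark after Definition~\ref{def:comp-mut} does give symmetrizability, and the diagonal is untouched); the entries through $k$ and the pairs with $B_{i,k}B_{k,j}\le 0$ are automatically fine; transit non-edges acquire the common magnitude $|B_{i,k}B_{k,j}|$; and on a transit triangle the dichotomy $|B'_{i,j}|\in\{p+q,\,|p-q|\}$ is keyed to orientation while $|A'_{i,j}|\in\{p+q,\,|p-q|\}$ is keyed to the parity of positive edges, and since $p,q>0$ these alternatives never coincide. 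This establishes the ``if'' direction and the final admissibility clause in full, which is all the paper ever uses (in the proof of Lemma~\ref{lem:lem-cycle} the ``only if'' is invoked only in configurations where $k$ is neither a source nor a sink).

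The gap is your closing sentence, where you declare the triangles in which $k$ is a source or sink ``harmless'' and then claim the \emph{stated} equivalence. Harmless is correct for the ``if'' direction, but it is fatal to the ``only if'' direction as the proposition is literally worded: the statement quantifies over \emph{all} triangles containing $k$, and your own analysis shows that companionship of $A'$ imposes no constraint on the source/sink ones. Concretely, take $n=3$, $k=3$, $B$ skew-symmetric with $B_{1,3}=B_{2,3}=B_{2,1}=1$ (arrows $3\to 1$, $3\to 2$, $1\to 2$), and the symmetric companion $A$ with $A_{1,2}=1$, $A_{1,3}=A_{2,3}=-1$. The unique triangle contains $k=3$, is non-oriented, and has exactly one (an odd number of) positive edge, so the stated condition fails; yet $B_{1,3}B_{3,2}<0$ and $B_{2,3}B_{3,1}<0$, so every correction term vanishes, and $\mu_3^{\epsilon}(A)$ is visibly a quasi-Cartan companion of $\mu_3(B)$ for either $\epsilon$. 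Hence the ``only if'' half, read literally, is false, and no bookkeeping can rescue it: what your computation actually proves is the equivalence with the condition imposed only on triangles through $k$ in which $k$ is neither a source nor a sink (equivalently, on pairs with $B_{i,k}B_{k,j}>0$). You should either state and prove that restricted equivalence explicitly, or note that the blanket quantification over all triangles containing $k$ is correct only in the ``if'' direction --- you cannot assert the statement as given and then discard exactly the triangles on which it can fail.
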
 
%: e.g. if $A$ is an admissible quasi-Cartan companion of the diagram $\check{D}_5^{(4)}$ from Figure~\ref{fig:critical} and $k$ is the vertex $a_1$ there, then the corresponding $A'$ is not admissible. We conjecture that $A'$ is also admissible if $\Gamma$ is mutation-equivalent to an acyclic diagram (i.e. a diagram which has no oriented cycles at all). In this paper we prove this conjecture for the affine case, i.e. for diagrams which are mutation-equivalent to an extended Dynkin diagram. We will do more: we give an explicit description of their mutation classes and give some characterizing properties. 
%\newpage

For convenience, we prove the following statement, which is a part of Theorem~\ref{th:companion0}:
\begin{proposition} 
\label{prop:prop1} 
Let $\Sigma_t=(\cc,B)$ be a $Y$-seed with respect to an acyclic initial seed $\Sigma_0=(\cc_0,B_0)$ such that $B_0$ is skew-symmetric. Let $A_0$ be the  (symmetric) generalized Cartan matrix associated to $B_0$. %(i.e. $A_{i,i}=2$ and $A_{i,j}=-|B_{i,j}|$ for $i\ne j$). 
Then we have the following:

\begin{itemize}
\item
%If $sgn(B_{j,i})=sgn(\cc_j)$, then $\cc_j^TS\cc_i=-sgn(B_{j,i})B_{j,i} $
If $sgn(B_{j,i})=sgn(\cc_j)$, then $\cc_j^TA_0\cc_i=-sgn(\cc_j)B_{j,i}=-|B_{j,i}| $.
%\item
%If $sgn(B_{j,i})=-sgn(\cc_j)$, then $\cc_j^TS\cc_i=-sgn(c_i)sgn(c_j)B_{j,i}=sgn(\cc_i)|B_{j,i}|$.???
\item
If $sgn(B_{j,i})=-sgn(\cc_j)$, then $\cc_j^TA_0\cc_i=sgn(\cc_i)B_{j,i}=-sgn(\cc_i)sgn(\cc_j)|B_{j,i}|$.

\end{itemize}
\noindent
(In particular; if $B_{j,i}\ne 0$, then $\cc_j^TA_0\cc_i=\mp|B_{j,i}|$.)
%$\cc_j^TS\cc_i=\pm B_{j,i} $
\end{proposition}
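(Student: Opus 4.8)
The plan is to argue by induction on the distance $d(t_0,t)$ between $t_0$ and $t$ in $\TT_n$. For the base case $t=t_0$ the $\cc$-vectors are the standard basis vectors, so for $i\ne j$ we get $\cc_j^TA_0\cc_i=(A_0)_{j,i}=-|(B_0)_{j,i}|$ directly from the definition of the generalized Cartan matrix. Since each $sgn(\cc_j)=+1$ in this seed, the hypothesis $sgn((B_0)_{j,i})=sgn(\cc_j)$ of the first bullet forces $(B_0)_{j,i}>0$, so $-sgn(\cc_j)(B_0)_{j,i}=-|(B_0)_{j,i}|$, and a parallel sign check disposes of the second bullet.

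For the inductive step, suppose the two identities hold at a vertex $t$ with seed $(\cc,B)$, and let $t'$ be obtained by the mutation $\mu_k$, producing $(\cc',B')$. I would first record two auxiliary facts carried through the induction. First, $A_0$ is symmetric (because $B_0$ is skew-symmetric), so $\cc_b^TA_0\cc_a=\cc_a^TA_0\cc_b$ throughout. Second, each diagonal value satisfies $\cc_i^TA_0\cc_i=2$; this follows from Theorem~\ref{th:DWZ2-RS}, since each $\cc_i$ is a real root and real roots have squared length $2$ for the symmetric form $\langle x,y\rangle=x^TA_0y$, or else it can be folded into the present induction using the off-diagonal hypotheses together with the rule $\cc'_i=\cc_i+[sgn(\cc_k)B_{k,i}]_+\cc_k$. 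I also use that $\mu_k(\cc,B)=(\cc',B')$ is again a $Y$-seed, so the $\cc'_i$ are sign-coherent and the symbols $sgn(\cc'_i)$ are well defined.

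The heart of the argument is a bilinear expansion. Writing $p_a=[sgn(\cc_k)B_{k,a}]_+$ and $p_b=[sgn(\cc_k)B_{k,b}]_+$, I would expand $(\cc'_b)^TA_0\cc'_a$ using $\cc'_k=-\cc_k$ and $\cc'_a=\cc_a+p_a\cc_k$, collect the resulting terms by bilinearity, and replace each of $\cc_b^TA_0\cc_a$, $\cc_b^TA_0\cc_k$, $\cc_k^TA_0\cc_a$ by the value prescribed by the inductive hypothesis, using $\cc_k^TA_0\cc_k=2$. One then compares the outcome with $B'_{b,a}$ as given by \eqref{eq:matrix-mutation}, and reads off which case of the statement applies according to whether $sgn(B'_{b,a})$ equals $sgn(\cc'_b)$. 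This splits naturally into the cases $a=k$, $b=k$, and $a,b\ne k$. Once all three are verified the ``in particular'' clause is immediate, since each bullet yields $\cc_j^TA_0\cc_i$ of absolute value $|B_{j,i}|$.

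The main obstacle is the sign bookkeeping, concentrated in the case $a,b\ne k$, where the quadratic correction $[B_{b,k}]_+[B_{k,a}]_+-[-B_{b,k}]_+[-B_{k,a}]_+$ in the mutation of $B$ must be matched against the term $p_ap_b\,(\cc_k^TA_0\cc_k)$ and the cross terms produced by the expansion. Handling this requires splitting on the signs of $B_{b,k}$ and $B_{k,a}$ relative to $sgn(\cc_k)$ (which governs when $p_a,p_b$ vanish) while simultaneously tracking how $sgn(\cc'_a)$ and $sgn(\cc'_b)$ are determined; the relation $B_{k,a}=-B_{a,k}$ and the symmetry of $A_0$ are what ultimately make the signs align. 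I expect each individual case to collapse, after substitution, to exactly one of the two prescribed identities, completing the induction.
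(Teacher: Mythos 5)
Your inductive strategy has a genuine gap, and it is concentrated exactly where you locate the ``main obstacle.'' Your inductive hypothesis is the statement of the proposition itself, which says nothing about $\cc_j^TA_0\cc_i$ when $B_{j,i}=0$ (the paper flags this explicitly right after the statement). But the expansion at the heart of your step, $({\cc'}_b)^TA_0{\cc'}_a=\cc_b^TA_0\cc_a+p_a\,\cc_b^TA_0\cc_k+p_b\,\cc_k^TA_0\cc_a+2p_ap_b$, unavoidably involves such uncontrolled inner products: if $B_{b,k}=0$ while $p_a\neq 0$, the cross term $p_a\,\cc_b^TA_0\cc_k$ is invisible to the hypothesis; if $B_{b,a}=0$ but the mutation creates the edge (i.e.\ $B'_{b,a}\neq 0$ because $B_{b,k},B_{k,a}\neq0$ with matching signs), the leading term $\cc_b^TA_0\cc_a$ is invisible. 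Repairing this by strengthening the hypothesis to ``$B_{j,i}=0\Rightarrow \cc_j^TA_0\cc_i=0$'' turns your claim into the full assertion that $A=(\cc_i^TA_0\cc_j)$ is a quasi-Cartan companion of $B$ --- that is, Theorem~\ref{th:companion0} --- and the paper's proof of \emph{that} induction is not sign bookkeeping: $\mu_k^{\epsilon}(A)$ is a companion of $\mu_k(B)$ only when every triangle through $k$ satisfies the sign condition of Proposition~\ref{prop:adm-triangle}, and verifying this requires the admissibility analysis on all cycles (Lemmas~\ref{lem:lem-cycle0} and~\ref{lem:lem-cycle}), which occupies most of Section~\ref{sec:proof}. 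A secondary but real defect: in the mixed case $sgn(\cc_a)=-sgn(\cc_k)$ with $p_a>0$, sign coherence tells you $sgn(\cc'_a)$ is well defined but your inductive data cannot determine which sign it is, and your case split needs it.

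The paper's own proof avoids induction on the tree altogether, and you already hold its key in hand: Theorem~\ref{th:DWZ2-RS} makes \emph{every} $\cc$-vector at \emph{every} seed a real root, hence of squared length $2$ for $A_0$. You use this only to supply the diagonal entries, but it does much more. For the first bullet, with $sgn(B_{j,i})=sgn(\cc_j)$, one mutation at $j$ gives $\cc'_i=\cc_i+|B_{j,i}|\cc_j$ by \eqref{eq:y-mutation}, and expanding $2=({\cc'}_i)^TA_0\cc'_i=2+2|B_{j,i}|\bigl(\cc_j^TA_0\cc_i+|B_{j,i}|\bigr)$ pins down $\cc_j^TA_0\cc_i=-|B_{j,i}|$ in one stroke --- no comparison with the matrix mutation rule, no sign cases, no induction. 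The second bullet then reduces to the first by a single mutation at $i$, split only on $sgn(\cc_i)$ versus $sgn(\cc_j)$, since the hypothesis of the first bullet holds at the mutated seed. If you want a correct proof along your lines, the honest route is to prove Theorem~\ref{th:companion0} and Proposition~\ref{prop:prop1} simultaneously by induction, accepting the cycle/admissibility machinery as the price; the paper's norm-expansion trick is what lets it prove the proposition first, at all seeds at once, and then feed it into that machinery.
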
 

\noindent
(Note that the proposition does not say anything if $B_{j,i}=0$.)

\smallskip
\begin{proof} 
To prove the first part, let us suppose that $sgn(B_{j,i})=sgn(\cc_j)$. 
Let $\mu_j(\cc,B)=(\cc',B')$ with $B'=\mu_j(B)$. Then $\cc_i'=\cc_i+[sgn(\cc_j)B_{j,i}]_+\cc_j=\cc_i+sgn(B_{j,i})B_{j,i}\cc_j=\cc_i+|B_{j,i}|\cc_j$. 
Note that, since $\cc_i$, $\cc_j$ and $\cc'_i$ are coordinate vectors of real roots, we have 
$2={\cc'_{i}}^TA_0{\cc'}_i=\cc_{i}^TA_0\cc_i=\cc_{j}^TA_0\cc_j$. Also $\cc_j^TA_0\cc_i=\cc_i^TA_0\cc_j$ because $A$ is symmetric.
Then
$2={\cc'_{i}}^TA_0\cc'_i=(\cc_i+|B_{j,i}|\cc_j)^TA_0(\cc_i+|B_{j,i}|\cc_j)=%\\+
\cc_i^TA_0\cc_i+\cc_i^TA_0|B_{j,i}|\cc_j+|B_{j,i}|\cc_j^TA_0\cc_i+\\+|B_{j,i}|\cc_j^TA_0|B_{j,i}|\cc_j%=\\
=\cc_i^TA_0\cc_i+2|B_{j,i}|\cc_j^TA_0\cc_i+|{B}_{j,i}|^2\cc_j^TA_0\cc_j%=\\
=2+2|B_{j,i}|\cc_j^TA_0\cc_i+|B_{j,i}|^22=\\=2+2|B_{j,i}|(\cc_j^TA_0\cc_i+|B_{j,i}|)$, 
implying that $\cc_j^TA_0\cc_i+|B_{j,i}|=0$ thus $\cc_j^TA_0\cc_i=-|B_{j,i}|=sgn(B_{j,i})B_{j,i}=sgn(\cc_j)B_{j,i}$. 

\smallskip

To prove the second part, let us suppose that $sgn(B_{j,i})=-sgn(\cc_j)$. 
Let $\mu_i(\cc,B)=(\cc',B')$ with $B'=\mu_i(B)$. Note that $sgn(B'_{j,i})=-sgn(B_{j,i})$ (by the definition of mutation). 
%Then by the above case we considered (replacing $B$ by $B'$), we have ${\cc'}_j^TS{\cc'}_i=-|B'_{j,i}|$. Note ${\cc'}_i=-\cc_i$. 

First assume that $sgn(\cc_j)=-sgn(\cc_i)$. Then ${\cc'}_j=\cc_j$ and ${\cc'}_i=-\cc_i$, so $sgn({\cc'}_j)=sgn({\cc}_j)=-sgn(B_{j,i})=sgn(B'_{j,i})$, i.e. for the $Y$-seed $(\cc',B')$, we have $sgn(B'_{j,i})=sgn({\cc'}_j)$.
Thus, by the first part, we have $-|B'_{j,i}|={\cc'}_j^TA_0{\cc'}_i=-{\cc}_j^TA_0{\cc}_i$. Thus $\cc_j^TA_0\cc_i=|B'_{j,i}|=|B_{j,i}|=-sgn(\cc_i)sgn(\cc_j)|B_{j,i}|$. 
%First assume that $sgn(\cc_j)=-sgn(\cc_i)$. Then ${\cc'}_j=\cc_j$ and ${\cc'}_i=-\cc_i$, so $sgn({\cc'}_j)=sgn({\cc'}_i)$. Thus for the $Y$-seed $\mu_i(\cc,B)=(\cc',B')$ we have the following: $sgn(B'_{j,i})=sgn({\cc'}_j)$.

Similarly, if $sgn(\cc_j)=sgn(\cc_i)$, then ${\cc'}_j=\cc_j+|B_{i,j}|\cc_i$, and ${\cc'}_i=-\cc_i$. Thus 
$sgn({\cc'}_j)=sgn({\cc}_j)=-sgn(B_{j,i})=sgn(B'_{j,i})$, i.e. for the $Y$-seed $(\cc',B')$ we have $sgn(B'_{j,i})=sgn({\cc'}_j)$. Then, by the first part, we have $-|B'_{j,i}|={\cc'}_j^TA_0{\cc'}_i=(\cc_j+|B_{i,j}|\cc_i)^TA_0(-{\cc}_i)=-{\cc_j}^TA_0{\cc}_i-|B_{i,j}|{\cc_i}^TA_0{\cc}_i=
-{\cc_j}^TA_0{\cc}_i-2|B_{i,j}|$. Thus, since $B'_{j,i}=-B_{j,i}$, we have $\cc_j^TA_0\cc_i=-|B'_{j,i}|=-|B_{j,i}|=-sgn(\cc_i)sgn(\cc_j)|B_{j,i}|$. %(recall that $B$ is skew-symmetric).

On the other hand, our assumption $sgn(B_{j,i})=-sgn(\cc_j)$ implies the following: $-sgn(\cc_i)sgn(\cc_j)|B_{j,i}|=-sgn(\cc_i)sgn(\cc_j)sgn(B_{j,i})B_{j,i}=-sgn(\cc_i)sgn(\cc_j)(-sgn(\cc_j))B_{j,i}=sgn(\cc_i)B_{j,i}$. This completes the proof.

%Thus $sgn({\cc'}_j)=-sgn({\cc'}_i)$. Then by the first part  $-|B'_{j,i}|={\cc'}_j^TS{\cc'}_i=(\cc_j+|B'_{i,j}|\cc_i)^TS(-{\cc}_i)=-{\cc_j}^TS{\cc}_i-|B'_{i,j}|{\cc_i}^TS{\cc}_i=-{\cc_j}^TS{\cc}_i-2|B'_{i,j}|$. Thus $\cc_j^TS\cc_i=-|B'_{j,i}|=-|B_{j,i}|$ (recall that $B$ is skew-symmetric).

%Since $|B'_{j,i}|=|B_{j,i}|=|B_{i,j}|$, this implies $\cc_j^TS\cc_i=-|B_{j,i}|$ .

%Case 2. $B'_{j,i}= 0$ (but $|A'_{j,i}|\ne 0 $).

%Suppose $sgn(B_{k,i})=-\epsilon$. 

%We may assume without loss of generality that $B_{k,i}\ne 0$ ?!!!. Then, By RS???, the vector $\cc'_i=\cc_i+sgn(B_{k,i})B_{k,i}\cc_k$ is a real root. Then, since it is a real root ???, we have $(\cc'_i)^TS(\cc'_i)=2d_i$ where $S=DA^0$ is the symmetric matrix(???). Also $\cc_i^TS\cc_k=A_{k,i}$ or $A_{i,k}$??? (Recall that $A$ is also symmetrized by $D$ ).  On the other hand,
 
%%%%%%%%%%%%%%%%%%%%%%%%%%%%%%%%%
%%%%%%%%%%%%%%%%%%%%%%%%%%%%%%%%%%
%$2d_i={\cc'_{i}}^TS\cc'_i=(\cc_i+sgn(B_{k,i})B_{k,i}\cc_k)^TS(\cc_i+sgn(B_{k,i})B_{k,i}\cc_k)=\\+
%\cc_i^TS\cc_i+\cc_i^TS(sgn(B_{k,i})B_{k,i})\cc_k+sgn(B_{k,i})B_{k,i}\cc_k^TS\cc_i+\\+sgn(B_{k,i})B_{k,i}\cc_k^TSsgn(B_{k,i})B_{k,i}\cc_k=\\
%=\cc_i^TS\cc_i+2(sgn(B_{k,i})B_{k,i})\cc_i^TS\cc_k+B_{k,i}^2\cc_k^TS\cc_k=\\
%=2d_i+2(sgn(B_{k,i})B_{k,i})d_kA_{k,i}+(sgn(B_{k,i})B_{k,i})^22d_k=\\=2d_i+2(sgn(B_{k,i})B_{k,i})d_k(A_{k,i}+sgn(B_{k,i})B_{k,i})$ 
%implying that $A_{k,i}+sgn(B_{k,i})B_{k,i}=0$ thus $A_{k,i}=-sgn(B_{k,i})B_{k,i}$ as claimed.
%%%%%%%%%%%%%%%%%%%%%%%%%%%%%%%%%%%
%%%%%%%%%%%%%%%%%%%%%%%%%%%%%%%%%%%%%%

\end{proof}

\section{Proofs of main results}
\label{sec:proof}
To prove our results, we first prove some lemmas for convenience, establishing some necessary conditions of the main results.  
\begin{lemma} 
\label{lem:lem-A'c'} 
In the set-up of Theorem~\ref{th:companion0}, suppose that $A=(\cc_i^TA_0\cc_j)$ is a quasi-Cartan companion of $B$. Let $B'=\mu_k(B)$ and let $A'=\mu_{k}^{\epsilon}(A)$ where $\epsilon=sgn(\cc_k)$. Then $A'_{i,j}=({\cc'}_i^TA_0{\cc'}_j)$.

Furthermore, if $\cc'_i\ne \cc_i$, then $\cc'_i=s_{\cc_k}(\cc_i)$, where $s_{\cc_k}$ is the reflection with respect to the real root $\cc_k$ and $\ZZ^n$ is identified with the root lattice.
\end{lemma}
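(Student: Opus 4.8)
The plan is to prove both assertions directly from the defining formulas by comparing, entry by entry, the two matrices $A'=\mu_k^\epsilon(A)$ and $A''=((\cc'_i)^TA_0\cc'_j)$. Since $A_0$ is symmetric, $A''$ is symmetric; and from Definition~\ref{def:comp-mut}, using that $A=(\cc_i^TA_0\cc_j)$ is already symmetric together with the skew-symmetry of $B$ (so that $B_{i,k}B_{k,j}=B_{j,k}B_{k,i}$), one checks that $A'$ is symmetric as well. This lets me treat the $k$-th row and $k$-th column together. The diagonal entry is immediate: $A''_{k,k}=(-\cc_k)^TA_0(-\cc_k)=\cc_k^TA_0\cc_k=2=A'_{k,k}$.

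For the off-diagonal entries in the $k$-th row I would expand $A''_{k,j}=(-\cc_k)^TA_0(\cc_j+[sgn(\cc_k)B_{k,j}]_+\cc_k)=-\cc_k^TA_0\cc_j-2[sgn(\cc_k)B_{k,j}]_+$ and compare with $A'_{k,j}=\epsilon\,sgn(B_{k,j})A_{k,j}$, where $\epsilon=sgn(\cc_k)$. The comparison splits according to whether $sgn(B_{k,j})$ equals $sgn(\cc_k)$, $-sgn(\cc_k)$, or $B_{k,j}=0$; in each branch I substitute the value of $\cc_k^TA_0\cc_j$ supplied by Proposition~\ref{prop:prop1} (and, when $B_{k,j}=0$, the companion hypothesis $|A_{k,j}|=|B_{k,j}|=0$, giving $A_{k,j}=0$), and the two sides agree.

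The main case, carrying the bulk of the work, is $i,j\neq k$. Writing $p_i=[sgn(\cc_k)B_{k,i}]_+$ and $p_j=[sgn(\cc_k)B_{k,j}]_+$, I expand $A''_{i,j}=(\cc_i+p_i\cc_k)^TA_0(\cc_j+p_j\cc_k)=A_{i,j}+p_jA_{i,k}+p_iA_{k,j}+2p_ip_j$ and match it against $A'_{i,j}=A_{i,j}-sgn(A_{i,k}A_{k,j})[B_{i,k}B_{k,j}]_+$, so the task reduces to the identity $p_jA_{i,k}+p_iA_{k,j}+2p_ip_j=-sgn(A_{i,k}A_{k,j})[B_{i,k}B_{k,j}]_+$. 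I expect the sign bookkeeping here to be the principal obstacle. I would organize it by the four sign combinations of $B_{k,i},B_{k,j}$ relative to $sgn(\cc_k)$, using skew-symmetry ($B_{i,k}B_{k,j}=-B_{k,i}B_{k,j}$) and Proposition~\ref{prop:prop1} to evaluate $A_{i,k}$ and $A_{k,j}$: when $sgn(B_{k,i})=sgn(\cc_k)$ one has $p_i=|B_{k,i}|$ and $A_{i,k}=-|B_{k,i}|$, while when the signs disagree $p_i=0$ (and symmetrically for $j$). Substituting these collapses both sides to the same quantity in every branch, with the degenerate cases $B_{k,i}=0$ or $B_{k,j}=0$ handled via $A_{i,k}=0$, resp. $A_{k,j}=0$, from the companion hypothesis.

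Finally, for the reflection statement, the case $i=k$ is immediate since $\cc'_k=-\cc_k=\cc_k-(\cc_k^TA_0\cc_k)\cc_k=s_{\cc_k}(\cc_k)$. For $i\neq k$, the hypothesis $\cc'_i\neq\cc_i$ forces $[sgn(\cc_k)B_{k,i}]_+>0$, hence $sgn(B_{k,i})=sgn(\cc_k)$ and $\cc'_i=\cc_i+|B_{k,i}|\cc_k$. By Proposition~\ref{prop:prop1} this sign condition gives $\cc_k^TA_0\cc_i=-|B_{k,i}|$, so that $\cc'_i=\cc_i-(\cc_i^TA_0\cc_k)\cc_k=s_{\cc_k}(\cc_i)$, which is exactly the reflection of $\cc_i$ in the real root $\cc_k$ (using $\cc_k^TA_0\cc_k=2$).
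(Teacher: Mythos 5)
Your proposal is correct and follows essentially the same route as the paper: both hinge on Proposition~\ref{prop:prop1} to identify $[sgn(\cc_k)B_{k,i}]_+$ with $-A_{k,i}$ (resp.\ $0$) according to whether $sgn(B_{k,i})=sgn(\cc_k)$, so that the $\cc$-vector mutation \eqref{eq:y-mutation} becomes exactly the basis change $e'_i=e_i-A_{k,i}e_k$, $e'_k=-e_k$ attached to Definition~\ref{def:comp-mut}. The only difference is presentational: the paper cites the Gram-matrix/change-of-basis remark following Definition~\ref{def:comp-mut} as ``easily checked,'' whereas you carry out that entry-by-entry verification explicitly (and also treat the $i=k$ reflection case, which the paper leaves implicit) --- a sound, if more laborious, rendering of the same argument.
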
 

\begin{proof} 
Let us note that for $\mu_k(\cc,B)=(\cc',B')$ we have the following: $\cc'_k=-\cc_k$; $\cc'_i =\cc_i+[sgn(\cc_k)B_{k,i}]_+\cc_k$ if $i \neq k$ by \eqref{eq:y-mutation}. 
On the other hand, $[sgn(\cc_k)B_{k,i}]_+\ne 0$ if and only if $sgn(\cc_k)B_{k,i}>0$ if and only if $sgn(\cc_k)=sgn(B_{k,i})$. Then, by Proposition~\ref{prop:prop1}, $[sgn(\cc_k)B_{k,i}]_+=-A_{k,i}$. Thus, by Definition~\ref{def:comp-mut}, we have $\cc'_k=\mu_k(\cc_k)$ and  
$A'_{i,j}=({\cc'}_i^TA_0{\cc'}_j)$. Furthermore, (for $i$ with $[sgn(\cc_k)B_{k,i}]_+\ne 0$), we have $\cc'_i=s_{\cc_k}(\cc_i)=\cc_i-A_{k,i}\cc_k$ by the well-known properties of real roots, see \cite[Chapter~5]{K}.
%(Let us also remark that for $i$ such that $\cc_i\ne \cc'_i$ we have $\cc_i=s_{\cc_k}(\cc_i)$, where $s_{\cc_k}$ is the reflection with respect to the real root $\cc_k$ and $\ZZ^n$ is identified with the root lattice.)
\end{proof} 

\begin{lemma} 
\label{lem:lem-cycle0} 
%Let $\Sigma_t=(\cc,B)$ be a $Y$-seed with respect to an acyclic initial seed $\Sigma_0=(\cc_0,B_0)$ such that $B_0$ is skew-symmetric. Let $A_0$ be the  (symmetric) generalized Cartan matrix associated to $B_0$. Then we have the following:
In the set-up of Theorem~\ref{th:companion0}, suppose that $A=(\cc_i^TA_0\cc_j)$ is a quasi-Cartan companion of $B$. Then any directed path or oriented cycle contains at most one edge $\{i,j\}$ such that $A_{i,j}>0$.
\end{lemma}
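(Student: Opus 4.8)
The plan is to translate the statement about the entries $A_{i,j}$ into one about the signs $sgn(\cc_i)$ of the $c$-vectors, and then to run an induction on the mutation distance of $B$ from the acyclic seed $B_0$. First I would extract from Proposition~\ref{prop:prop1} a clean description of which edges are positive. Since $A_0$ is symmetric and $A_{i,j}=\cc_i^TA_0\cc_j$, the matrix $A$ is symmetric, so for a directed edge of $\Gamma(B)$, i.e. a pair $i,j$ with $B_{j,i}>0$, the sign of $A_{i,j}=A_{j,i}$ is well defined. Reading the two cases of Proposition~\ref{prop:prop1} with $sgn(B_{j,i})=+1$: if $sgn(\cc_j)=+1$ then $A_{j,i}=-|B_{j,i}|<0$, while if $sgn(\cc_j)=-1$ then $A_{j,i}=sgn(\cc_i)|B_{j,i}|$. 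Hence the edge $\{i,j\}$ is positive exactly when $sgn(\cc_i)=+1$ and $sgn(\cc_j)=-1$; that is, a positive edge is precisely a \emph{descent}, along the orientation of the edge, from a positive $c$-vector to a negative one.

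Consequently, along a directed path $v_0\to v_1\to\cdots\to v_m$ (and cyclically around an oriented cycle) the positive edges are exactly the places where the sign sequence $sgn(\cc_{v_0}),sgn(\cc_{v_1}),\dots$ drops from $+1$ to $-1$, so the Lemma becomes the assertion that this sequence has at most one such descent, i.e. the indices with $sgn(\cc_{v_\ell})=+1$ form a single contiguous block along the path (a single arc around the cycle). I note at once that two \emph{consecutive} edges can never both be positive, since their shared vertex would have to be simultaneously a positive and a negative $c$-vector; thus the only thing left to exclude is a ``recovery'' pattern $+,\dots,-,\dots,+,\dots,-$ with an intermediate non-positive edge (a directed edge from a negative to a positive $c$-vector) between two descents.

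I would prove the required contiguity by induction on the number $d$ of mutations needed to reach $B$ from $B_0$; this is legitimate because $B_0$ is skew-symmetric and acyclic, so by Theorem~\ref{th:DWZ2-RS} every seed along the way is a genuine $Y$-seed whose $c$-vectors are real roots. For $d=0$ the companion is $A_0$ itself, whose off-diagonal entries are all $\le 0$, so there are no positive edges and no oriented cycles and the claim is vacuous. For the inductive step, write $B=\mu_k(\hat B)$ with $\hat B$ one step closer to $B_0$, let $\hat A$ be its $c$-vector companion and $\epsilon=sgn(\hat\cc_k)$; by Lemma~\ref{lem:lem-A'c'} we have $A=\mu_k^\epsilon(\hat A)=(\cc_i^TA_0\cc_j)$, while the inductive hypothesis gives that every directed path and oriented cycle of $\Gamma(\hat B)$ carries at most one positive edge. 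Assuming for contradiction a directed path or oriented cycle $W$ of $\Gamma(B)$ with two positive edges, I would use the mutation rule \eqref{eq:matrix-mutation} to pull $W$ back to $\Gamma(\hat B)$ — rerouting $W$ around $k$ whenever it passes through $k$ — and use the formula $A_{i,j}=\hat A_{i,j}-sgn(\hat A_{i,k}\hat A_{k,j})[\hat B_{i,k}\hat B_{k,j}]_+$ of Definition~\ref{def:comp-mut} together with the reflection identity $\cc'_i=s_{\cc_k}(\cc_i)$ of Lemma~\ref{lem:lem-A'c'} to track the descent pattern, thereby producing a directed path or oriented cycle of $\Gamma(\hat B)$ with two positive edges, a contradiction.

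The main obstacle is exactly this last bookkeeping step. Under mutation at $k$ the edges incident to $k$ reverse orientation, and the edges among the remaining vertices can be created, destroyed, or have their $A$-sign flipped according to whether $k$ lies on an oriented or a non-oriented two-path through it; so a careful case analysis, organized by the position of the two descents of $W$ relative to $k$ and by the signs $sgn(\cc_k)$, $sgn(\cc_i)$ of the neighbours, is needed to certify that both descents survive the pullback to $\Gamma(\hat B)$. An alternative would be to argue directly: reflecting in $\cc_k$ via Lemma~\ref{lem:lem-A'c'} turns a putative recovery configuration into a single $c$-vector $\cc'$ of an adjacent seed, whose forced sign coherence (all coordinates nonnegative or all nonpositive) should be incompatible with the descent pattern; but making this contradiction precise reduces once more to the same local analysis at $k$, which is where the real work lies.
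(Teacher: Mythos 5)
Your reduction of the lemma to a statement about sign descents is correct and worthwhile: by Proposition~\ref{prop:prop1}, an edge $i\to j$ of $\Gamma(B)$ has $A_{i,j}>0$ exactly when $sgn(\cc_i)=+1$ and $sgn(\cc_j)=-1$, and in particular two consecutive edges of a directed path can never both be positive (this is, in substance, the paper's base case). But everything after that is a plan, not a proof: you yourself flag the pullback of the configuration $W$ through the mutation at $k$ as ``the main obstacle'' and ``where the real work lies,'' and that work is precisely the mathematical content of the lemma. Worse, the induction you set up (on mutation distance from $B_0$, with an \emph{arbitrary} mutation vertex $k$) cannot be closed with the lemma's statement as the inductive hypothesis. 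Since mutation at $k$ can create, delete, or reverse edges between neighbours of $k$, a directed path or oriented cycle in $\Gamma(B)$ with two positive edges can pull back to a configuration involving a \emph{non-oriented} cycle in $\Gamma(\hat B)$, about which your hypothesis says nothing; to derive a contradiction there you would need the even-parity property of $A$ on non-oriented cycles, i.e.\ you would have to prove the content of Lemma~\ref{lem:lem-cycle} simultaneously, by a strengthened joint induction. The heavy case analysis in the paper's proof of Lemma~\ref{lem:lem-cycle} (Cases 1--2 with subcases, where some contradictions indeed come from the non-oriented parity condition rather than from paths) shows this is not a routine bookkeeping step.

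The paper avoids all of this by inverting the roles: it fixes the seed-changing operation to be a mutation \emph{at an internal vertex of the configuration itself} and inducts on the length $r$ of the path rather than on mutation distance. Concretely, given a minimal directed path $1\to 2\to\cdots\to r$ with $A_{1,2},A_{r-1,r}>0$ (so intermediate edges negative, by your adjacent-descents remark), it mutates at vertex $2$: then the only affected edge is $\{1,3\}$, whose new companion entry is forced positive by Definition~\ref{def:comp-mut} (since $A_{1,2}>0$ and $A_{2,3}<0$), while Lemma~\ref{lem:lem-A'c'} guarantees the mutated companion is again of the form $({\cc'}_i^TA_0{\cc'}_j)$ for the adjacent $Y$-seed, so the shorter path $1\to 3\to\cdots\to r$ in $\Gamma(\mu_2(B))$ satisfies the same hypotheses; the case $r=3$ is handled locally at the middle vertex via Proposition~\ref{prop:prop1}, and an oriented cycle reduces to a path by discarding a negative edge. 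Choosing the mutation vertex inside the path is what makes the sign-tracking trivial, and it is the idea your proposal is missing; as written, your inductive step is a gap that a ``careful case analysis'' of the kind you gesture at would not fill without also carrying the non-oriented cycle condition through the induction.
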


\begin{proof} 
Let us first assume without loss of generality that $P: 1\to 2\to...\to r$, $r\geq 3$, %$P: i_1\to i_2\to...\to i_r$, $r\geq 3$
is a directed path in $\Gamma$ such that the entries $A_{1,2},A_{{r-1},r}>0$. %but $A_{2,3},...,A_{{r-2},{r-1}}<0$. 
Let us first assume that $r=3$. Then, since $P$ is equioriented, we have $sgn(B_{2,1})=-sgn(B_{{2},3})$, so either 
$sgn(B_{2,1})=sgn(\cc_2)$ or $sgn(B_{2,3})=sgn(\cc_2)$. Then, by the first part of Proposition~\ref{prop:prop1}, we have $A_{2,1}<0$ or $A_{2,3}<0$, which contradicts our assumption.  
Let us now assume that $r>3$. Then, for $B'=\mu_{2}(B)$, the subdiagram $P'=P-\{2\}=1\to 3\to 4..\to r$ is a directed path in $\Gamma(B')$ with length $r-1$. Furthermore, for $A'=\mu_{2}^{\epsilon}(A)$ (where $\epsilon=sgn(\cc_2)$), we have $A'_{i,j}={\cc'}_i^TA_0{\cc'}_j$ for $i,j \in P'$ (Lemma~\ref{lem:lem-A'c'}) and $A'_{1,3},A'_{{r-1},r}>0$ (by definition). Then the statement follows by induction on $r$.

%$i,j=i_1,i_2,i_4,...,i_{r-1}$ by 

Let us now assume that $C: 1\to 2\to...\to r\to 1$, $r\geq 3$ is an oriented cycle in $\Gamma$ such that $A_{1,2},A_{{j-1},j}>0$, $j \geq 3$. 
If $j=3$ or $r=3$, then the first part of Proposition~\ref{prop:prop1} is contradicted; otherwise we could remove one of the edges in $C$ whose corresponding entry in $A$ is negative, then we obtain a path $P$ as in the first part of the proof. This completes the proof of the lemma. 
%%%%%%%%%%%%%%%%%%%%%%%%%%%%%%%%%
%Let us now assume that $C: 1\to 2\to...\to r\to 1$, $r\geq 3$ is an oriented cycle in $\Gamma$ such that $A_{1,2},A_{{j-1},j}>0$, $j \geq 3$. 
%If $j=3$ or $r=3$, then the first part of Proposition~\ref{prop:prop1} is contradicted; otherwise for $B'=\mu_{2}(B)$ the subdiagram $C'=C-\{2\}=\{1,3,...,r\}$ in $\Gamma(B')$ is an oriented cycle with length $r-1$. Furthermore $A'_{i,j}=({\cc'}_i^TA_0{\cc'}_j)$ for $i,j \in C'$ (Lemma~\ref{lem:lem-A'c'}) with $A'_{1,3}>0$ and $A'_{i,j}=A_{i,j}$ for $\{i,j\}\ne \{1,3\}$, so $C'$ has the same number ($\geq 2$) of edges $\{i,j\}$ such that $A'_{i,j}>0$. Then the statement follows by induction on $r$.
%%%%%%%%%%%%%%%%%%%%%%%%%%%%%%%%%%%%%%%%%%%%5
\end{proof}

\begin{lemma} 
\label{lem:lem-cycle} 
%Let $\Sigma_t=(\cc,B)$ be a $Y$-seed with respect to an acyclic initial seed $\Sigma_0=(\cc_0,B_0)$ such that $B_0$ is skew-symmetric. Let $A_0$ be the  (symmetric) generalized Cartan matrix associated to $B_0$ and suppose that $A=(\cc_i^TA_0\cc_j)$ is a quasi-Cartan companion of $B$.
%Let $C$ be a cycle in $\Gamma(B)$. Then we have the following: 
In the set-up of Theorem~\ref{th:companion0}, suppose that $A=(\cc_i^TA_0\cc_j)$ is a quasi-Cartan companion of $B$ and let $C$ be a cycle in $\Gamma(B)$. Then we have the following: 

If $C$ is non-oriented, then it has an even number of edges $\{i,j\}$ such that $A_{i,j}>0$; if $C$ is oriented, then it contains exactly one edge $\{i,j\}$ such that $A_{i,j}>0$. In particular, $A$ is an admissible quasi-Cartan companion of $B$.
\end{lemma}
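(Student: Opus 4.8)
The plan is to prove Lemma~\ref{lem:lem-cycle} by reducing the case of an arbitrary cycle to the oriented case already handled in Lemma~\ref{lem:lem-cycle0}, using the mutation-compatibility of the companion from Lemma~\ref{lem:lem-A'c'} as the engine for an inductive argument. The oriented part is essentially done: Lemma~\ref{lem:lem-cycle0} already shows an oriented cycle has \emph{at most} one edge with $A_{i,j}>0$, so I would first argue it has \emph{exactly} one. For this, note that a cycle cannot have \emph{all} of its edges carrying negative entries of $A$; indeed, if $C:1\to 2\to\cdots\to r\to 1$ were oriented with every $A_{s,s+1}<0$, then the quasi-Cartan submatrix $A|_C$ would be the generalized Cartan matrix of a cycle, which is not positive semidefinite of the required form (its determinant signals an affine/indefinite type incompatible with all $\cc_i$ being norm-$2$ real roots spanning a subsystem). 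More cleanly, I would invoke Proposition~\ref{prop:prop1}: going around the oriented cycle, the signs $sgn(B_{s,s+1})$ and the signs $sgn(\cc_s)$ must be consistent, and a parity/consistency check forces at least one positive entry. Combined with the ``at most one'' bound, this pins it to exactly one.

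For the non-oriented case, the strategy is induction on the length $r$ of the cycle, mirroring the path argument in Lemma~\ref{lem:lem-cycle0}. Given a non-oriented cycle $C$, I would pick a vertex $k$ on $C$ that is a \emph{source or sink} of the cycle (such a vertex exists precisely because $C$ is non-oriented, since a cycle all of whose vertices have one in- and one out-edge along $C$ is oriented). Mutating at such a $k$ via $\mu_k$ shortens the cycle: the two edges of $C$ incident to $k$ get replaced by a single edge joining the two neighbors of $k$, producing a shorter cycle $C'$ in $\Gamma(B')$. By Lemma~\ref{lem:lem-A'c'}, the companion $A'=\mu_k^\epsilon(A)$ with $\epsilon=sgn(\cc_k)$ still satisfies $A'_{i,j}=({\cc'}_i^TA_0{\cc'}_j)$, so $A'$ is again a companion of the form required by the hypothesis, and the inductive statement applies to $C'$. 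The crux is then a \textbf{bookkeeping} step: I must track how the number of positive-$A$ edges changes when the two edges at $k$ collapse to one. Using the mutation rule $A'_{i,j}=A_{i,j}-sgn(A_{i,k}A_{k,j})[B_{i,k}B_{k,j}]_+$ from Definition~\ref{def:comp-mut} together with the sign formulas of Proposition~\ref{prop:prop1}, I would show that the parity of the count of positive edges is preserved (the new edge's sign compensates exactly for whatever the two removed edges contributed, modulo $2$).

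The main obstacle I anticipate is exactly this parity-tracking across the mutation, because the sign of the new edge $A'_{i,j}$ between the neighbors of $k$ depends on several interacting data: the signs $A_{i,k},A_{k,j}$ of the two collapsing edges, whether the new pair $\{i,j\}$ was already an edge of $C$ (creating a $2$-cycle or cancellation that must be handled separately), and the local orientation at $k$ (source versus sink). I would isolate the smallest nontrivial case, a non-oriented triangle ($r=3$) containing $k$, and verify directly from Proposition~\ref{prop:prop1} that it has an even number (zero or two) of positive edges; this base case is also exactly the content underlying Proposition~\ref{prop:adm-triangle}, so the two should be made to agree. The inductive step then only needs the claim that collapsing at a source/sink changes the positive-edge count by an even amount, which follows once the triangle formed by $k$ and its two neighbors is analyzed by the same triangle computation. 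Finally, since every oriented cycle has exactly one positive edge (odd) and every non-oriented cycle has an even number, the definition of admissibility is met verbatim, giving the concluding ``In particular, $A$ is admissible'' with no extra work.

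\begin{proof}
The oriented case is immediate from Lemma~\ref{lem:lem-cycle0} once we rule out that all edges of an oriented cycle carry negative entries of $A$, which follows from a sign-consistency check via Proposition~\ref{prop:prop1}; hence an oriented cycle has exactly one edge with $A_{i,j}>0$. For a non-oriented cycle $C$ we induct on its length. Choosing a source or sink $k$ of $C$ and mutating, Lemma~\ref{lem:lem-A'c'} guarantees $A'=\mu_k^\epsilon(A)$ is again of the form $({\cc'}_i^TA_0{\cc'}_j)$, and the shorter cycle $C'$ obtained by collapsing the two edges at $k$ inherits the inductive hypothesis. A direct parity computation using Definition~\ref{def:comp-mut} and Proposition~\ref{prop:prop1}, with the non-oriented triangle as base case, shows the number of positive edges changes by an even amount, so $C$ has an even number of positive edges. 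Combining both cases yields admissibility of $A$.
\end{proof}
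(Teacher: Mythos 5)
Your proposal breaks down at both of its load-bearing points. For the oriented case, neither of your two suggested arguments for upgrading ``at most one'' to ``exactly one'' works. The sign-consistency check via Proposition~\ref{prop:prop1} does \emph{not} force a positive edge: writing $\epsilon_i=sgn(\cc_i)$, that proposition shows the edge $i\to i{+}1$ of an oriented cycle (so $B_{i+1,i}>0$) carries $A_{i+1,i}>0$ precisely when $\epsilon_{i+1}=-1$ and $\epsilon_i=+1$; if all the $\cc_i$ on the cycle have the same sign, every edge entry comes out negative, in full consistency with Proposition~\ref{prop:prop1}. The Gram-matrix argument fails as well: $A_0$ may be of wild type, so no positive (semi)definiteness is available, and configurations of norm-$2$ real roots whose Gram matrix is the affine cycle matrix genuinely occur --- the simple roots of type $\widetilde{A}_{r-1}$ are exactly such a configuration, and they arise as initial $\cc$-vectors when $\Gamma(B_0)$ is a non-oriented cycle (which is acyclic in the paper's sense). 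Since the symmetric matrix $A|_C$ cannot distinguish an oriented from a non-oriented cycle, no seed-intrinsic argument of this kind can work; the statement must be propagated from the acyclic seed. That is what the paper does: it formulates the preservation claim (*) --- if $(B,A)$ satisfies the oriented-cycle sign condition then so does $(\mu_k(B),\mu_k^{\epsilon}(A))$ --- and proves it by a lengthy case analysis (whether $k$ lies on the offending cycle $C'$, whether $k$'s neighbors in $C'$ are adjacent, oriented versus non-oriented triangles, and the chain of cycles $C_1,\dots,C_r$ through $k$). This is the bulk of the paper's proof, and your proposal offers no substitute for it.

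For the non-oriented case your inductive engine rests on a false mechanism: mutating at a source or sink $k$ of $C$ does \emph{not} shorten the cycle. At such a $k$ no path passes through $k$, so $[B_{i,k}B_{k,j}]_+=0$ and the underlying graph is unchanged; only the arrows at $k$ reverse. The collapse $C\mapsto C\setminus\{k\}$ occurs when $k$ is a flow-through vertex (neither source nor sink), which is where the paper mutates: there Proposition~\ref{prop:prop1} forces one of the two edges at $k$ to carry a negative entry, and the rule of Definition~\ref{def:comp-mut} shows the new edge between the neighbors of $k$ inherits the sign of the other removed edge, so the count of positive edges is exactly preserved. Moreover, a non-oriented cycle need not contain any flow-through vertex (a $4$-cycle with alternating orientation), so the paper first performs source/sink mutations --- which keep the length and the parity of positive edges, since both entries $A_{i,k}$ at $k$ are multiplied by the same sign --- until a flow-through vertex appears; your induction would stall on such cycles. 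Finally, your base case is not ``direct from Proposition~\ref{prop:prop1}'': the paper's $r=3$ step is itself a mutation argument, deriving $|A'_{c_2,c_3}|=\big||B_{c_2,c_3}|-|B_{c_2,c_1}||B_{c_1,c_3}|\big|\neq|B'_{c_2,c_3}|$ and contradicting, via Lemma~\ref{lem:lem-A'c'} and Proposition~\ref{prop:prop1}, that $A'=({\cc'}_i^TA_0{\cc'}_j)$ must be a companion of $B'$. The non-oriented half of your plan is repairable along these lines, but the oriented half is missing its essential idea.
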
 

\begin{proof} 
Let us first suppose that $C=\{c_1,...,c_r\}$, $r\geq 3$, is non-oriented and for exactly an odd number $m$ of edges $\{i,j\}$ in $C$ we have $A_{i,j}>0$. 
We will arrive at a contradiction. Let us first assume that $r=3$. We may also assume without loss of generality that $k=c_1$ is the vertex which is neither source nor sink in $C$. Let
$\mu_k(\cc,B)=(\cc',B')$ with $B'=\mu_k(B)$ and let $A'=\mu^\epsilon_{k}(A)$. Then $A'_{i,j}={\cc'}_i^TA_0{\cc'}_j$ for $i,j \in C$ by Lemma~\ref{lem:lem-A'c'}. Furthermore,
$|B'_{c_2,c_3}|=|B_{c_2,c_3}|+|B_{c_2,c_1}||B_{c_1,c_3}| \ne 0$ but $|A'_{c_2,c_3}|=||B_{c_2,c_3}|-|B_{c_2,c_1}||B_{c_1,c_3}||$ (by definition of mutations); so $0\ne |B'_{c_2,c_3}|\ne |A'_{c_2,c_3}|$. This contradicts Proposition~\ref{prop:prop1}.

Let us now assume that $r>3$. Note that if $k$ is a source or sink then for, $\mu_k(\cc,B)=(\cc',B')$ and $A'=\mu^\epsilon_{k}(A)$,
the diagram $C'=\mu_k(C)$ is also a non-oriented cycle with an odd number of edges $\{i,j\}$ such that $A'_{i,j}>0$ (with $A'_{i,j}=({\cc'}_i^TA_0{\cc'}_j)$ for $i,j \in C$ by Lemma~\ref{lem:lem-A'c'}). Thus, to arrive at a contradiction, we may assume without loss of generality that 
$k$ is neither a source nor a sink in $C$. Note then that, for one of the edges, say $\{i,k\}$, incident to $k$ in $C$, we have $A_{i,k}<0$  (Proposition~\ref{prop:prop1}). %Furthermore $A'_{i,j}=({\cc'}_i^TS{\cc'}_j)$ for $i,j \in C$ by ???. 
Again let $\mu_k(\cc,B)=(\cc',B')$ and $A'=\mu^\epsilon_{k}(A)$ with $\epsilon=sgn(\cc_k)$. Then $C'=C-\{k\}$ is a non-oriented cycle in $\Gamma(B')$ with the same odd number of edges $\{i,j\}$ in $C'$ such that $A'_{i,j}>0$ (with $A'_{i,j}={\cc'}_i^TA_0{\cc'}_j$ for all $i,j$ in $C'$ by Lemma~\ref{lem:lem-A'c'}). Then the lemma follows by induction on $r$.

%$C'=\mu_{k_r}...\mu_{k_1}(C)$ such that $C'$ contains a non-oriented triangle $T$ with the same conditions; $A'_{i,j}=\cc_i^TS\cc_j$ for any $i,j$ in $T$ (???) where $A'$ is the mutated companion. Thus there is a vertex $k$ in $T$ which is neither source nor sink. Thus if $T$ has three edges assigned +, then part 1 of the above propositon is violated. Thus we assume $T=i,j,k$ has exactly one edge $e$ assigned +. (recall k is the vertex which is neither source nor sink)??? CAREFUL!!! Then, by definitions, $B''_{i,j}\ne 0$ and furthermore $|B''_{i,j}|\ne |A''_{i,j}|$ (here $B''=\mu_k(B')$, $A''_{i,j}={\cc''}_i^TS{\cc''}_j$ by ??? ) contradicting the above proposition.

%Case 1. $e=i,k$ or $e=j,k$ Without loss of generality say $e=j,k$. Then, by the above proposition, $sgn(c_j)=sgn(c_i)=-sgn(c_k)$.
%$A''=\mu_k(A')$ is not a companion of $\mu_k(T)$(if $T=i,j,k$ then the edge $|B''_{i,j}|\ne |A''_{i,j}|$ here $A''_{i,j}={\cc''}_i^TS{\cc''}_j$ by ??? ) contradicting the above proposition.

To show the statement for oriented cycles, we will show the following: 

%Now to complete the proof of the statement we will show the following:

(*) if $B$ and $A$ (as in the statement of the lemma) satisfy the sign condition of the lemma on oriented cycles of $\Gamma(B)$, then $B'=\mu_k(B)$ and $A'=\mu^\epsilon_k(A)$ satisfies the sign condition on oriented cycles of $\Gamma(B')$ (where $\epsilon=sgn(\cc_k)$).

For this suppose to the contrary that $B,A$ satisfy the sign condition but $B',A'$ do not. 
We will arrive at a contradiction, establishing (*). First let us note that, by the first part of the lemma, $A$ also satisfies the sign condition on non-oriented cycles. Then $A$ is, in particular, admissible. Thus $A'$ is a quasi-Cartan companion of $B'$ such that $A'_{i,j}={\cc'}_i^TA_0{\cc'}_j$ for all $i,j$ (by Proposition~\ref{prop:adm-triangle} and 
Lemma~\ref{lem:lem-A'c'}, here $\mu_k(\cc,B)=(\cc',B')$). Then, by the first part of the lemma, $A'$ satisfies the sign condition on non-oriented cycles.
Let us now suppose (to establish (*) by contradiction) that $A'$ does not satisfy the sign condition of the lemma on an oriented cycle $C'$ in $\Gamma(B')$. Then for any edge $\{i,j\}$ in $C'$ we have $A'_{i,j}<0$ by  Lemma~\ref{lem:lem-cycle0}. % (recall that $A'_{i,j}={\cc'}_i^TA_0{\cc'}_j$).
{Below we use the following properties of $B'$ and $A'$ without explicit reference}: $\mu_k(B')=B$ and $A=\mu_k^{\epsilon'}(A')$, where $\epsilon'=sgn({\cc'}_k)=-\epsilon=-sgn(\cc_k)$. 

Suppose first that $k$ is in $C'$. Then we have the following: If $C'=\{i,j,k\}$ is a triangle, then $|A_{i,j}|\ne |B_{i,j}|$, implying that $A$ is not a quasi-Cartan companion of $B$ (Lemma~\ref{prop:adm-triangle}), which contradicts our assumption. If $C'$ is not a triangle, then $C''=C'-\{k\}$ is an an oriented cycle in $\Gamma(B)$ such that for any edge $\{i,j\}$ in $C''$, we have $A_{i,j}<0$, contradicting our assumption (that $A$ satisfies the sign condition on oriented cycles). Thus, for the rest of the proof, we may assume that $k$ is not in $C'$, and, for any oriented cycle $C'''$ in $C'k:=C'\cup\{k\}$ that contains $k$, there is exactly one edge $\{i,j\}$ such that $A'_{i,j}>0$. We may also assume that $k$ is connected to at least two vertices in $C'$  with two opposite orientations (otherwise $k$ is a source or sink in $C'k$ and then $C'$ is also an oriented cycle in $\Gamma(B)$ with $A'_{i,j}=A_{i,j}$ for any edge $\{i,j\}$ in $C'$, contradicting our assumption that $A$ satisfies the sign condition of the lemma on oriented cycles. For convenience, we proceed arguing in cases.

\credit{Case 1} \emph{$k$ is connected to exactly two vertices in $C'$ and they are connected to each other.} Suppose that $k$ is connected to $\{l,l+1\}$ in $C'$. Then, by our assumption above, $k$ is connected to $\{l,l+1\}$ with opposite orientations. 

%Note that, by the definitions and Proposition above ...???, $A'_{l,k}A'_{k,l+1}<0$ and $A_{l,k}A_{k,l+1}>0$. (MAKE THIS SEPERATE STATEMENT???maybe not) 
\smallskip
\noindent
{Subcase 1.} The triangle $T=\{l,l+1,k\}$ is non-oriented. Since $A'$ satisfies the sign condition of the lemma on $T$  and $A'_{l,l+1}<0$ by assumption, we have $sgn(A'_{l,k})=sgn(A'_{k,l+1})$; on the other hand, by Propositon~\ref{prop:prop1}, $A'_{l,k}<0$ or $A'_{k,l+1}<0$ (recall that $k$ is neither source nor sink in $C'k$). Thus $A'_{l,k},A'_{k,l+1}<0$. 
%because $A'$ satisfies the sign condition of the lemma on $T$ by assumption (note that $A'_{l,l+1}<0$ because of our assumption that for any edge $\{i,j\}$ in $C'$ we have $A'_{i,j}<0$)
Then, by the definition of mutations, $C'$ is an oriented cycle in $\Gamma(B)$ such that for any edge $\{i,j\}$ in $C'$, we have $A_{i,j}<0$, contradicting our assumption.  

\smallskip
\noindent
{Subcase 2.} The triangle $T=\{l,l+1,k\}$ is oriented. Then, since $A'$ satisfies the sign condition of the lemma on $T$, we have $A'_{l,k}A'_{k,l+1}<0$.
Let us first assume that $|B'_{l,k}B'_{k,l+1}|<|B'_{l,l+1}|$. Then, by the definition of mutations, $C'$ is an oriented cycle in $\Gamma(B)$ such that for any edge $\{i,j\}$ in $C'$, we have $A_{i,j}<0$ (because $A_{l,l+1}<0$), contradicting our assumption. Similarly, if $|B'_{l,k}B'_{k,l+1}|=|B'_{l,l+1}|$, then $C'k$ is an oriented cycle in $\Gamma(B)$ such that for any edge $\{i,j\}$ in $C'k$, we have $A_{i,j}<0$, contradicting our assumption.  
Let us now assume that $|B'_{l,k}B'_{k,l+1}|>|B'_{l,l+1}|$. Then $C'$ is a non-oriented cycle in $\Gamma(B)$ such that $A_{l,l+1}>0$ but, for any edge $\{i,j\}\ne \{l,l+1\}$ in $C'$, we have $A_{i,j}<0$. This contradicts that $A$ satisfies the sign condition of the lemma on the non-oriented cycles.

\credit{Case 2} \emph{$k$ is connected to two vertices in $C'$ which are not connected to each other.} Let $c_1,c_2,...,c_r$, $r\geq 2$ be the vertices connected to $k$ ($c_i$'s may or may not be adjacent). Let $C_1,...,C_r$ be the cycles in $C'k$ that contain $k$ such that $C_i$ contains $\{k,c_i\}$ and $\{k,c_{i+1}\}$ with $c_{r+1}=c_1$.

%If $Ck$ has an oriented cycle that contains $k$ such that contains an even number of edges assigned +, then $\mu_k(B')=B$ also contains such an oriented cycle or $A=\mu_k(A')$ is not a companion???. So assume that any cycle that contains $k$ is as in the statement of the proposition.
\smallskip
\noindent
Subcase 1. One of $C_1,...,C_r$, say $C_1$, is oriented. 
Then $C_2$ and $C_r$ are non-oriented. Assume without loss of generality that $A'_{k,c_2}>0$ (then $A'_{k,c_1}<0$; recall that $\{k,c_2\}$ is in $C_2$). 
Then, since $A'$ satisfies the sign conditon of the lemma on the non-oriented cycle $C_2$, we have $A'_{k,c_3}>0$ and $\{k,c_3\}$ has the same orientation as $\{k,c_2\}$ (by Propositon~\ref{prop:prop1}). Then, similarly, $C_3$ is also non-oriented,
$A'_{k,c_4}>0$ and $\{k,c_4\}$ has the same orientation as $\{k,c_3\}$ and $\{k,c_2\}$. Continuing similarly, the cycle $C_{r-1}$ is also non-oriented and $A'_{k,c_r}>0$ and $\{k,c_r\}$ has the same orientation as $\{k,c_{r-1}\}$,..., $\{k,c_3\}$ and $\{k,c_2\}$.
Then $C_r$ is non-oriented with $A'_{i,j}<0$ for any $\{i,j\} \ne \{k,c_r\}$ in $C_r$ and $A'_{k,c_r}>0$; this contradicts our assumption that $A'$ satisfies the sign conditon of the lemma on the non-oriented cycles.

\smallskip
\noindent
Subcase 2. $C_1,...,C_r$ are non-oriented. Suppose first that there is $C_i$, say $C_1$, where $k$ is neither source nor sink, so $\{k,c_1\}$ and 
$\{k,c_2\}$ have opposite orientations. Then $\{k,c_3\}$ has the same orientation as $\{k,c_2\}$ (otherwise $C_2$ is oriented). Arguing similarly, the edge $\{k,c_r\}$ has the same orientation as $\{k,c_{r-1}\},...,\{k,c_2\}$. But then $C_r$ becomes oriented, contradicting this case.
If $k$ is either a source or sink in each $C_i$, then $k$ is a source or sink in $C'k$, contradicting our assumption.
This completes the case and the proof of the lemma.
% Since $A'$ satisfies the sign condition of the lemma on $C_1$, we have $sgn(A'_{c_1,k})=sgn(A'_{k,c_2})$; on the other hand, by Propositon~\ref{prop:prop1}, $A'_{c_1,k}<0$ or $A'_{k,c_2}<0$, so $A'_{k,c_1},A'_{k,c_2}<0$. 
% Then $\{k,c_3\}$ have the same orientation as $\{k,c_2\}$ (otherwise $C_2$ is oriented) and $A'_{k,c_3}<0$. Continuing with this argument, the edge $\{k,c_r\}$ have the same orientation as $\{k,c_{r-1\},...,\{k,c_2\}$ (and $A'_{k,c_r}<0$). But then $C_r$ becomes oriented, contradicting this case.
\end{proof}

%LEMMA: Suppose $\Sigma=(\cc,B)$ a seed. Let $-a={\cc'_{i}}^TS\cc'_j (times 1/d_i)$. Then $|-a|=|B'_{j,i}|$.

Let us now prove Theorem~\ref{th:companion0}. For this, we only need to prove that $A=({\cc_{i}}^TA_0\cc_j)$ is a quasi-Cartan companion of $B$; 
the other claims then follow from Proposition~\ref{prop:prop1}. 
To show that $A$ is a quasi-Cartan companion of $B$, we use an inductive argument. The basis of the induction is the fact that the generalized Cartan matrix $A_0$ is a quasi-Cartan companion of $B_0$. For the inductive step, 
%suppose now that $A$ is the companion of $B=\mu_r...\mu_1(B^0)$ as in the statement of Theorem~\ref{th:companion2} satisfying the conditions there. 
%We may assume that r is minimal (MAY NOT BE NEEDED!). Let $B'=\mu_k(B)$ and $A'=\mu_k^{\epsilon}(A)$ where $\epsilon$ is as in ...%Then, by Prop 1,$\cc'=\mu^{\epsilon}(\cc)$ and $A'$ is a companion of $B'$. 
%We will first show that $A'$ is a companion which satsifes the following for any $j$:
%(*) $A_{j,i}=\cc_j^TS\cc_i$ for all $i,j$ and $A$ is a companion.
%(*) $A'_{j,i}=-sgn(B'_{j,i})B'_{j,i}$ for all $i$ such that $sgn(B'_{j,i})=sgn(\cc_j)$ ???
%Let us first show that (*) is satisfied for $A'$. First $A'_{j,i}={\cc'}_j^TS{\cc'}_i$ by ???.  
%We may assume that $A$ is not affine type!!!
%Now we will show that $A'$ is companion. i.e. |$B'_{j,i}|=|A'_{j,i}|$
%If $B'_{j,i}\ne 0$, then this follows from the above prop. so we consider the case where $B'_{j,i}=0$ so $A'_{j,i}\ne 0$
suppose that $A=(\cc_i^TA_0\cc_j)$ is a quasi-Cartan companion of $B$. %We will show that $A'=\mu^\epsilon(A)$ is a companion of $A'$. 
Then $A$ is, in particular, admissible by Lemma~\ref{lem:lem-cycle}; so $A'=\mu_k^\epsilon(A)$ is a quasi-Cartan companion of $B'=\mu_k(B)$ (where $\epsilon=sgn(\cc_k)$). This completes the proof of Theorem~\ref{th:companion0}. Then Corollary~\ref{cor:companion} follows from Lemma~\ref{lem:lem-A'c'}. Theorem~\ref{th:companion2} follows from Lemma~\ref{lem:lem-cycle}; here the fact that any two admissible quasi-Cartan companions can be obtained from each other by a sequence of simultaneous sign changes in rows and columns has been obtained in \cite[Theorem~2.11]{S3}. 
Also it can be easily checked that simultaneous sign changes in rows and columns commute with the mutation of quasi-Cartan companions. Then Corollary~\ref{cor:admissible2} follows. 

%Let us also note that $A$ is admissible by Theorem~\ref{th:companion2}. Therefore any admissible companion of $B$ can be obtained from $A$ by a sequence of simultaneous sign changes in rows and columns. 

%Also it can be easily checked that simultaneous sign changes in rows and columns commute with the mutation of quasi-Cartan companions. Then the second part of Corollary~\ref{cor:admissible} follows. 

%\cite[Theorem~2.11]{S3}???

% this suppose $B=\mu_r...\mu_1(B^0)$ and $A$ an admissible companion obtained by mutating accordingly. 

%We prove by induction on $l$ the following: for any index $k$, if $y_i,c_k$ $B_{n+i,k}$ is non-zero for some $i$ then for any $j$ such that $B_{n+i,k}$ and $B_{j,k}$ has opposite signs  symmetric product of $c_j,c_k$ (i.e. $A_{j,k}$)is negative. 


\begin{thebibliography}{2}


%\setlength{\itemsep}{0.05in}%{-.02in}
%\bibitem{ABBS}I.~Assem, M.~Blais, T.~Bruestle, A.~Samson, Mutation classes of skew-symmetric $3\times 3$-matrices. Comm. Algebra 36 (2008), no. 4, 1209--1220. 

\bibitem{BGZ} M.~Barot, C.~Geiss and A.~Zelevinsky,~Cluster algebras of finite type and positive symmetrizable matrices. J. London Math. Soc. (2) 73 (2006), no. 3, 545--564. 


%\bibitem{BBH}
%A.~Beineke, T.~Bruestle, and L.~Hille, {Cluster-cyclic quivers  with three vertices and the {M}arkov equation}, arXiv:math/0612213.%, to appear in Algebras and Representation Theory.
  

%\bibitem{BS} M.~Barot, A.~Seven, Cluster algebras of finite mutation type, in preparation. 

\bibitem{BM} M.~Barot and R.~Marsh,~Reflection group presentations arising from cluster algebras. arXiv:1112.2300v1 (2011). 


%\bibitem{B} J.~Bastian, Mutation classes of $\tilde{A}_n$-quivers and derived equivalence classification of cluster tilted algebras of type $\tilde{A}_n$, Arxiv:0901.1515

%\bibitem{BR} A.~Buan and I. Reiten, Acyclic quivers of finite mutation type, Int. Math. Res. Not. 2006, Art. ID 12804, 10 pp.

\bibitem{BMR} A.~Buan, R.~Marsh  and I. Reiten, Cluster-tilted algebras. Trans. Amer. Math. Soc. 359 (2007), no. 1, 323-–332.

\bibitem{BRS} A.~Buan, I. Reiten and A.~Seven, Tame concealed algebras and cluster quivers of minimal infinite type. J. Pure Appl. Algebra 211 (2007), no. 1, 71--82.


%\bibitem{CK} P.~Caldero and B. Keller, From triangulated categories to cluster algebras II, Ann. Sci. Ecole Norm. Sup. (4) 39 (2006), 983-1009.

%\bibitem{CFIKV} F.~Cachazo, B.~Fiol, K.~Intriligator, S.~Katz, and C.~Vafa, {A geometric unification of dualities} (hep-th/0110028), Nuclear Phys. B  628  (2002),  no. 1-2, 3--78.

%\bibitem{DWZ} H.~Derksen, J.~Weyman, and A.~Zelevinsky, {Quivers with  potentials and their representations {I}: {Mutations}}, arXiv:0704.0649v2, to  appear in Selecta Mathematica.

\bibitem{DWZ2}
H.~Derksen, J.~Weyman, A.~Zelevinsky, Quivers with potentials and their representations~II: Applications to cluster algebras, \textsl{J. Amer. Math. Soc.} \textbf{23} (2010),  no.~3, 749--790.

%\bibitem{FSTu} A.~Felikson, M.~Shapiro and P.~Tumarkin, Skew-symmetric cluster algebras of finite mutation type, Arxiv:0811.1703. 

%\bibitem{FSTu2} A.~Felikson, M.~Shapiro and P.~Tumarkin, Cluster algebras of finite mutation type via unfoldings, Arxiv:1006.4276, to appear in Int. Math. Res. Notices. 

%\bibitem{FST} S.~Fomin, M.~Shapiro and D.~Thurston, Cluster algebras and triangulated surfaces. I. Cluster complexes, Acta. Math. 201 (2008), no.1, 83-146
 
%\bibitem{CAI}S.~Fomin and A.~Zelevinsky, {Cluster algebras. {I}.  {F}oundations}, J. Amer. Math. Soc. {15} (2002), no.~2, 497--529  (electronic).
 
%\bibitem{CAII} S.~Fomin and A.~Zelevinsky, Cluster Algebras II, Inv. Math.  12 (2003), 335-380.

\bibitem{CAIV} S.~Fomin and A.~Zelevinsky, Cluster Algebras IV, Compos. Math.  143 (2007), no. 1 112-164.

%\bibitem{FZ-CDM03} S.~Fomin and A.~Zelevinsky, Cluster Algebras: Notes for the CDM-03 conference, math.RT/0311493. 



%\bibitem{HV} D.~Happel, D.~Vossieck, Minimal algebras of infinite representation type with preprojective component, Manuscripta Math. 42 (1983), no. 2-3, 221--243.


\bibitem{HI} M.~Herschend, O.~Iyama, Selfinjective quivers with potential and 2-representation-finite algebras, Compositio Mathematica 147 (2011), no.6, 1885-2010. 


\bibitem{K} V. ~Kac,
Infinite dimensional Lie algebras, Cambridge University Press (1991).

%\bibitem{Ke} B. ~Keller, Cluster algebras, quiver representations and triangulated categories, Arxiv:0807.1960.

%\bibitem{KeJava} B. ~Keller, \emph{Quiver mutation in {J}ava}, Java applet available at the author's home page.


\bibitem{NZ} T.~Nakanisihi and A.~Zelevinsky, On tropical dualities in acyclic cluster algebras, Arxiv:1101.3763v3 (2011).

%\bibitem{R} C.M. Ringel, Tame algebras and integral quadratic forms, Springer Lecture Notes in Mathematics, vol. 1099, 1984. 

%\bibitem{S} A. Seven, Orbits of groups generated by transvections over $F_2$, J. Algebraic Combin. 21 (2005), no. 4, 449--474.


%\bibitem{GR} A.~L.~Gorodentsev, A.~N.~Rudakov, \emph{ Exceptional    vector bundles on projective spaces. }  Duke Math.~J.~54  (1987),  no.~1, 115--130. 

%\bibitem{K} O. Kerner, {\em Wild cluster tilted algebras of rank three}, preprint, July 2006.

\bibitem{P} M.J.~Parsons, Companion bases for cluster-tilted algebras, arXiv:1111.0450v1 (2011).

%\bibitem{PY} S.~Parter, J.~Youngs, The symmetrization of matrices with diagonal matrices, J.Math.Anal.App., 4, 1962, 102-110.

%\bibitem{Rudakov} A.~N.~Rudakov,{ Markov numbers and exceptional bundles on $P\sp 2$. } (Russian)Izv.~Akad.~Nauk SSSR Ser.~Mat.~52 (1988), no.~1, 100--112, 240;translation in Math.~USSR-Izv.~32 (1989), no.~1, 99--112. 



%\bibitem{S2} A.~Seven, Recognizing cluster algebras of finite type, Electron. J. Combin. 14 (1) (2007). 

\bibitem{RS} N.~Reading and D.~Speyer, Combinatorial frameworks for cluster algebras, Arxiv:1111.2652v1 (2011).


\bibitem{S3} A.~Seven, Cluster algebras and semipositive symmetrizable matrices, Trans. Amer. Math. Soc. 363(5), 2011, 2733-2762.

\bibitem{ST} D.~Speyer and H.~Thomas, Acyclic cluster algebras revisited, arXiv:1203.0277, (2012).

%\bibitem{S5} A.~Seven, Mutation classes of $3\times 3$ generalized Cartan matrices, to appear in Highlights in Lie Algebras, Eds. T. Joseph, A. Melnikov and I. Penkov, Progress in Mathematics, Birkhauser.





%\bibitem{Z} A.~Zelevinsky,
%Connected components of real double Bruhat cells, Intern. Math. Res. Notices 2000, No. 21, 1131-1153







\end{thebibliography}
\end{document}